\documentclass[11pt,a4paper]{article}

\usepackage[utf8]{inputenc}
\usepackage[T1]{fontenc}
\usepackage{amsmath}
\usepackage{amsthm}          
\usepackage{newtxtext,newtxmath}  
\usepackage[margin=2.5cm]{geometry}
\usepackage{mathtools}
\usepackage[round]{natbib}
\usepackage{graphicx}
\usepackage{url}
\usepackage{xcolor}
\usepackage{float}
\usepackage{enumitem}
\usepackage{tikz}
\usepackage[colorlinks=true,linkcolor=blue!60!black,citecolor=blue!60!black,urlcolor=blue!60!black]{hyperref}

\theoremstyle{plain}
\newtheorem{theorem}{Theorem}
\newtheorem{lemma}[theorem]{Lemma}
\theoremstyle{definition}
\newtheorem{definition}[theorem]{Definition}
\newtheorem{step}{Step}
\theoremstyle{remark}
\newtheorem{remark}[theorem]{Remark}

\newenvironment{keywords}{\par\medskip\noindent\textbf{Keywords:}\ }{\par}

\newcommand{\X}{\mathcal{X}}
\newcommand{\Mset}{\mathcal{M}}
\newcommand{\PX}{\mathcal{P}(\X)}
\newcommand{\R}{\mathbb{R}}
\newcommand{\KL}{\mathrm{KL}}
\newcommand{\Df}{D_f}
\newcommand{\Ef}{\mathcal{E}_f}
\newcommand{\Ekl}{\mathcal{E}_{\KL}}
\newcommand{\dd}{\,\mathrm{d}}
\newcommand{\eqdef}{\mathrel{\overset{\text{def}}{=}}}
\DeclareMathOperator*{\argmin}{arg\,min}
\DeclareMathOperator{\spn}{span}
\newcommand{\MT}[1]{M^{\mathcal{T}}_{#1}}

\title{On Minimum Divergence for Distribution Reconstruction}
\author{%
  Justin Bec\thanks{Equal contribution.}
  \qquad Thomas Le Corre\footnotemark[1]
  \qquad Aur\'elien Garivier\\[8pt]
  \small Unit\'e de Math\'ematiques Pures et Appliqu\'ees (UMPA)\\
  \small ENS de Lyon, France
}
\date{}

\begin{document}
	\maketitle
	
	\begin{abstract}
	We study the possibility of generalizing  maximum-entropy reconstruction, which approximately recovers a probability distribution from finitely many (generalized) moments by minimizing the Kullback--Leibler (KL) divergence to a reference measure.
	The reconstruction lies in an exponential family and is also the best KL approximation of the unknown distribution within that family: moment matching and information projection coincide. Replacing KL by a smooth $f$-divergence, the reconstruction still exists in an associated family, but the best approximation property does not. Specifically, a family is called \emph{entropy-adapted} if the projection of any distribution matches its moments. We show that, up to equivalence, KL is the only $C^2$ $f$-divergence that is entropy-adapted, whatever the choice of the generalized moments. We illustrate the consequences of this failure numerically.
	\end{abstract}
	\begin{keywords}
		Maximum entropy; $f$-divergences; Kullback--Leibler divergence; distributional reinforcement learning
	\end{keywords}
	\section{Introduction}
	\subsection{Moment reconstruction problems}
	
	Recovering a probability distribution from incomplete information is a classical problem in probability, statistics, and applied mathematics. In many situations, the distribution of interest is not directly accessible: only a finite collection of summary quantities, such as means, variances, or other expectations, can be observed or estimated. The \emph{moment reconstruction problem} consists in using this partial information to estimate the corresponding probability distribution. While classical mathematical foundations go back to the Hamburger, Stieltjes, and Hausdorff moment problems \citep{shohat1943problem,akhiezer1965moment}, this problem appears to be of particular importance in \emph{Distributional Reinforcement Learning} (DistRL), where finite representations of return distributions that are mixed and convolved in Bellman iterations are a crucial bottleneck~\citep{bellemare2017}. It has been shown recently that maximum-entropy reconstruction outperforms the traditional quantile or expectile approaches by orders of magnitude~\citep{bringas2026momentmatching}. 
	
	Moment information is particularly natural because expectations of prescribed functions can often be estimated even when the underlying distribution cannot be observed in full. Classical polynomial moments are widely used in probability and statistics, while they are shown in~\citet{Rowland2019,marthe2023} to be the only possible choice in discounted DistRL. Meanwhile, generalized moments make it possible to encode a much broader range of features. Depending on the choice of the functions being averaged, they may describe physical observables, marginal statistics, spectral coefficients, option prices, or other linear measurements of an unknown distribution \citep{karlin1966tchebycheff}. Reconstruction from moments thus appears in many fields. In statistics, the method of moments is one of the oldest approaches to parameter estimation, and moment conditions form the basis of the generalized method of moments \citep{pearson1894iii,hansen1982large}. Related questions also arise in spectral estimation, signal processing, inverse problems, uncertainty quantification \citep{burg1975maximum,mead1984maximum,borwein1991convergence} and more recently in distributional reinforcement learning \citep{bringas2026momentmatching}.
	
	A central difficulty is that finitely many moments typically do not determine a unique probability distribution\footnote{In fact, even the complete infinite sequence of polynomial moments may not be sufficient if the support or additional growth conditions are not specified~\citep{shohat1943problem,akhiezer1965moment}.}. With only finitely many moments, the problem is generically under-determined: infinitely many distributions are compatible with them. Moment reconstruction therefore involves two distinct questions:
	\begin{itemize}
		\item Feasibility: are the prescribed moments compatible with at least one
		probability distribution?
		\item Selection: which distribution should be chosen among all compatible candidates?
	\end{itemize}
	
	Several selection principles have been proposed. Depending on the application, one may for example impose a parametric model and minimize a regularization functional. Another particularly influential approach is to choose the compatible distribution that minimizes an information criterion relative to a prescribed reference measure. This leads to maximum-entropy and minimum-divergence reconstruction, which provide the starting point of the present work \citep{jaynes1957information,csiszar1975divergence,borwein1991convergence}.
	
	\subsection{Maximum-entropy reconstruction}
	
	A classical way to resolve the ambiguity inherent in moment reconstruction is the \emph{maximum-entropy principle}. Introduced in statistical mechanics by \citet{jaynes1957information,jaynes1957informationII}, this principle prescribes selecting the probability distribution that is compatible with the known moments without adding further information. Indeed, the principle was originally motivated by the interpretation of entropy as a measure of missing information: the chosen distribution is taken to be the one compatible with the moment constraints that has the maximal Shannon entropy, or equivalently the minimal Kullback--Leibler (KL) divergence to the uniform measure. Relative versions of the principle allow another reference distribution to be incorporated, leading to the closely related method of minimum discrimination information developed by Kullback and further studied by Csiszár~\citep{kullback1951information,kullback1959information,csiszar1975divergence}.
	Maximum-KL-entropy methods have since become standard tools in probability, statistics, information theory and inverse problems. They have been used for the reconstruction of distributions from moments \citep{mead1984maximum,bandyopadhyay2005maximum}, for statistical inference under linear constraints \citep{csiszar1975divergence,csiszar1984sanov}, and for the analysis of under-determined inverse problems \citep{borwein1991convergence,borwein1994entropy}. More broadly, entropy minimization plays an important role in convex optimization, large-deviation theory, statistical physics and computational optimal transport \citep{dembo1998large,peyre2019computational}.
	
	Classically, maximum-entropy methods are hence based on Shannon entropy, or equivalently on KL divergence. But any other notion of divergence, with its own geometry and computational properties, is a legitimate candidate for alternative procedures. To avoid any misunderstanding, we will hence in the sequel rather speak of maximum-KL-entropy methods: the purpose of this article is precisely to investigate by which other divergences KL can be replaced. 
	
	One of the main reasons for the success of the maximum-KL-entropy principle is that moment constraints lead naturally to exponential families. These families form one of the central classes of statistical models and include many classical distributions. Their statistical and geometric properties have been extensively studied, notably in the theories of exponential families and information geometry \citep{brown1986fundamentals,barndorff1978information,amari2000methods,
		amari2016information}. In particular, exponential families provide a finite-dimensional representation of a reconstruction problem that is initially posed over an infinite-dimensional space of probability measures.
	
	Thanks to the geometry of the Kullback--Leibler divergence~\citep{csiszar1975divergence,csiszar2004information,amari2016information}, this reduction is accompanied by a remarkable projection property: the member of the exponential family that matches the prescribed moments is also the Kullback--Leibler best approximation of the unknown distribution within that family. This correspondence between moment matching, entropy minimization and information projection is particularly valuable in reconstruction problems: the best approximation is hence efficiently computed as a simple convex optimization program.
	
	The coincidence of these three viewpoints (maximum-entropy reconstruction, modelling by exponential families, and projection by moment matching) is key to the success of maximum-KL-entropy reconstruction. But are there even better choices? Does this property  also hold for other divergences? This paper investigates whether it is a distinctive feature of Kullback--Leibler geometry, and what may fail for other choices.
	
	\subsection{\(f\)-divergences}
	
	The Kullback--Leibler divergence is only one among many possible measures of discrepancy between probability distributions. A broad and widely used class is provided by the \(f\)-divergences introduced by Csiszár and, independently, by Ali and Silvey \citep{csiszar1967information,alibsilvey1966general}. This class contains, among others, the Kullback--Leibler and reverse Kullback--Leibler divergences, the Hellinger distance, the Pearson and Neyman \(\chi^2\)-divergences, and the total variation distance. Owing to their general form and their contraction under measurable maps, \(f\)-divergences have become central tools in information theory, statistics, machine learning and hypothesis testing \citep{liese2006divergences,csiszar2004information,polyanskiy2024information}. Replacing the Kullback--Leibler divergence by another \(f\)-divergence in an entropy-minimization problem leads naturally to generalized notions of entropy and to deformed analogues of exponential families. Such constructions appear in several branches of generalized statistical mechanics and information geometry, notably through power-law, \(q\)-exponential and \(\alpha\)-families \citep{naudts2004estimators,amari2016information}.
	
	Projection theorems and Pythagorean inequalities are likewise available beyond the classical entropy setting, particularly for Bregman divergences and for various generalized geometries \citep{bregman1967relaxation,amari2009alpha}. Alternative divergences may also admit their own generalized linear families, deformed moment constraints, or dual projection structures \citep{sundaresan2007guessing,kumar2015projection,kumar2015reverse,wong2018logarithmic}. However, the existence of a generalized entropy family does not automatically imply that its best approximation can be recovered by matching the initially observed moments. This distinction motivates the central question of the present work. Suppose that the same \(f\)-divergence is used both to select a distribution under moment constraints and to measure the approximation error with respect to the resulting generalized family. For which generators does the resulting projection coincide, uniformly, with ordinary moment matching? Equivalently, which \(f\)-divergences preserve the classical compatibility between linear constraints, entropy minimization and projection onto the associated family?

	\begin{figure}[ht]
		\centering
		\resizebox{\textwidth}{!}{%
		\begin{tikzpicture}[
			scale=1,
			every node/.style={font=\small},
			>=latex
			]
			
			\draw[thick, rounded corners=8pt] (-6,-2.2) rectangle (8,3.2);
			\node[anchor=north west,font=\bfseries] at (-5.8,3.0) {Divergences};
			
			\draw[thick, blue] (-1.6,0.4) ellipse (1.6 and 1.4);
			\node[blue!80!black] at (-1.6,0.75) {$C^2$ $f$-divergences};
			
			\draw[thick, teal!70!black] (1.7,0.4) ellipse (1.7 and 1.3);
			\node[teal!70!black] at (1.7,0.75) {Bregman divergences};
			
			\draw[thick, orange!85!black] (2.9,0.4) ellipse (2.9 and 2.5);
			\node[orange!85!black,align=center] at (2.9,2.1) {Entropy-adapted \\ divergences};
			
			\fill (0,0.4) circle (2pt);
			\node[left=3pt] at (0,0.4) {$\KL$};
			
			\fill (3.4,-0.8) circle (2pt);
			\node[below right=0pt] at (3.4,-0.8) {$\KL^2$};
			
		\end{tikzpicture}}
		\caption{Schematic relation between $C^2$ $f$-divergences, Bregman divergences, and the entropy-adapted divergences (Definition~\ref{def:adapted}). On probability distributions, the classes of $f$-divergences and Bregman divergences intersect only at $\KL$ (up to equivalence), which is a classical result \citep{amari2009alpha}. The projection property is verified for every Bregman divergence but is weaker than being Bregman, as illustrated by $\KL^2$. More details are presented in Appendix~\ref{s:AppA}.}
		\label{fig:bregman-fdiv-summary}
	\end{figure}

	Several existing results already single out the Kullback--Leibler divergence through other structural properties. In particular, among divergences between probability distributions, it is known to be the only one that is both an \(f\)-divergence and a Bregman divergence \citep{frigyik2008functional,amari2009alpha}; on positive measures, this property is shared by the whole class of \(\alpha\)-divergences. Other characterizations rely on additivity, sufficiency, locality or consistency of inference rules \citep{csiszar1991why,shore1980axiomatic}. The characterization developed here is of a different nature: it is based directly on the compatibility of maximum-entropy reconstruction with projection by ordinary moment matching. As illustrated by elementary counterexamples for the reverse Kullback--Leibler and Hellinger divergences in Figure~\ref{fig:counterexamples}, this compatibility fails for standard alternatives to relative entropy. The purpose of this paper is to show that this failure is not accidental. Under the assumptions specified below, the Kullback--Leibler divergence is, up to the usual equivalence of generators, the unique \(f\)-divergence for which the entropy reconstruction and the best approximation within the associated family are universally connected by ordinary moment matching.
	
	\subsection{Contributions and Organisation of the Paper}
	
	Maximum-entropy reconstruction under moment constraints is usually tied to KL, where it coincides with moment matching and with information projection. We ask whether this compatibility survives when KL is replaced by a general $f$-divergence, and show that it does not:
	
	\begin{itemize}
		\item We formalize the compatibility above as the property of being \emph{$\mathcal T$-entropy-adapted}.
		\item Our main result (Theorem~\ref{thm:main}) is that, for \emph{every} free family $\mathcal T$, KL is,
		up to equivalence, the \emph{only} $C^2$ $f$-divergence that is $\mathcal T$-entropy-adapted: no choice of
		moment functions, however special, makes another $f$-divergence compatible with moment matching. We prove
		this result by writing the Lagrange optimality condition of the $\Df$-projection at a well-chosen point of
		$\Ef$ and exploiting it on finite configurations.
		\item We discuss two consequences: in moment-based DistRL, the refined convergence guarantees of maximum-entropy reconstruction are specific to KL; in classical statistics, the coincidence of maximum likelihood, KL projection and moment matching in exponential families is likewise not generic to other $f$-divergences.
	\end{itemize}
	
	The rest of the paper is organized as follows. Section~\ref{s:Preliminaries} sets up the moment
	reconstruction problem, recalls the classical KL/exponential-family picture, and introduces the
	$f$-exponential family $\Ef$ together with the $\mathcal T$-entropy-adapted property and our main
	theorem. Section~\ref{s:Connections} discusses the two applications sketched above, Section~\ref{s:Proof}
	contains the proof of Theorem~\ref{thm:main}, and Section~\ref{s:Num} illustrates the result numerically. Appendix~\ref{s:AppA} relates it to the classical KL/Bregman characterization.
	
	\section{Formal setting and notation}
	\label{s:Preliminaries}
	
	Let $\X \subseteq \R$ be a measurable set, finite or not, equipped with a reference probability measure $\lambda$, and let
	\[
	\mathcal{T} = (T^{0}, T^{1}, \dots, T^{k})
	\]
	be a finite family of measurable functions $T^{j} : \R \to \R$ with $T^0\equiv1$, which we assume to be \emph{free}, i.e.\ linearly independent as functions on $\R$ (their restrictions to a given $\X$ need not be). We write $\Mset$ for the set of probability measures on $\X$ that are equivalent to $\lambda$, and for an integrable function $g$ and $\nu \in \Mset$ we abbreviate $\nu(g) = \int_{\X} g \dd\nu$. Given an unknown probability measure $\nu_0$, we assume that only the moment vector
	\[
	\nu_0(\mathcal{T}) = \bigl(\nu_0(T^{0}), \nu_0(T^{1}), \dots, \nu_0(T^{k})\bigr) \in \R^{k+1}\,
	\]
	is available.
	
	\begin{remark}
		When $\X = [0,1]$ and $T^{j}(x) = x^j$ these are the usual moments of $\nu_0$, which is where the terminology comes from. The moment functions are defined on the whole of $\R$ because the same family $\mathcal{T}$ will be used on several spaces $\X$, in particular on finite ones (Definition~\ref{def:adapted}); nothing below depends on the structure of $\R$, which could be replaced by any infinite set.
	\end{remark}
	
	Except for very special spaces $\X$, this inverse problem admits infinitely many solutions as the set of measures consistent with the data,
	\[
	\MT{\nu_0} \eqdef \bigl\{\, \mu \in \Mset : \mu(T^{j}) = \nu_0(T^{j})\ \text{for all } 0 \le j \le k \,\bigr\},
	\]
	is an infinite-dimensional convex set, cut out by $k$ affine constraints, and we need some principle to single out one representative. One of the most influential principles for resolving this ambiguity is the maximum-entropy principle.
	
	\subsection{Maximum entropy and the exponential family}
	
	\paragraph{The maximum-entropy principle.}
	The classical answer is to pick, among all measures compatible with
	the data, the closest to the reference $\lambda$. Measuring closeness to $\lambda$
	by the Kullback--Leibler divergence,
	\[
	\KL(\nu, \lambda) = \int_{\X} \ln\!\frac{\dd\nu}{\dd\lambda} \dd\nu,
	\]
	the reconstruction is
	\begin{equation}\label{eq:maxent}
		\nu^\star \;=\; \argmin_{\nu \in \MT{\nu_0}} \KL(\nu, \lambda)\,.
	\end{equation}
	This is a convex problem: $\nu \mapsto \KL(\nu,\lambda)$ is strictly convex and $\MT{\nu_0}$ is
	convex, so its solution, whenever it exists, is unique.
	When $\lambda$ is the uniform measure, $ \KL(\nu, \lambda) = \mathrm{Ent}(\lambda) - \mathrm{Ent}(\nu)$ where $\mathrm{Ent}$ denotes the Shannon entropy, hence the name of maximum entropy reconstruction.
	
	\paragraph{The exponential family.}
	A Lagrange computation on the affine constraints $\nu(T^{j}) = \nu_0(T^{j})$ shows that the minimizer
	of~\eqref{eq:maxent} has a density of the form
	$\dd\nu^\star/\dd\lambda \propto \exp(\theta^{\top} \mathcal{T})$ for some $\theta \in \R^{k+1}$. In other
	words, $\nu^\star$ lies in the \emph{exponential family}
	\[
	\Ekl \;=\; \Bigl\{\, \nu_\theta \in \Mset : \tfrac{\dd\nu_\theta}{\dd\lambda} = \exp(\theta^{\top} \mathcal{T}),\ \theta \in \R^{k+1},\ \nu_\theta(\X) = 1 \,\Bigr\}\,.
	\]
	As the data $\nu_0(\mathcal{T})$ ranges over all attainable values, the corresponding minimizers $\nu^\star$
	sweep out exactly $\Ekl$: the exponential family is the locus of all maximum-entropy
	reconstructions.
	The gain is that, unlike the infinite-dimensional sets $\MT{\nu_0}$, the family $\Ekl$ is
	finite-dimensional. It is faithfully parametrized by $\theta$, which after normalization
	effectively ranges over a convex subset of $\R^k$. Reconstructing $\nu_0$ from its moments thus collapses to
	producing a single vector $\theta$, and this is precisely why one chooses to work inside $\Ekl$.
	
	\begin{remark}
		The picture is the one familiar from information geometry: the constraints defining $\MT{\nu_0}$
		are linear in the density, making it a mixture-flat (\(m\)-flat) affine set, whereas the
		log-densities of $\Ekl$ are affine in $\mathcal{T}$, making it an exponential-flat (\(e\)-flat) set. A
		one-parameter exponential family ($k=1$) is literally an \(e\)-geodesic; for general $k$ it is an
		\(e\)-flat submanifold, and it is in this loose sense that we will speak of $\Ekl$ as the
		maximum-entropy ``geodesic''.
	\end{remark}
	
	\subsection{Projection equals moment matching}
	
	As the reconstruction belongs to $\Ekl$, one may consider the following two questions: \begin{itemize}
		\item[\textbf{(P1)}] \emph{Moment matching.} Find the point of $\Ekl \cap \MT{\nu_0}$, i.e. the
		element of $\Ekl$ that reproduces the observed moments.
		\item[\textbf{(P2)}] \emph{Best approximation.} Find
		$\displaystyle \argmin_{\nu_\theta \in \Ekl} \KL(\nu_0, \nu_\theta)$, i.e. the element of
		$\Ekl$ that is closest to $\nu_0$.
	\end{itemize}
	These are not on the same footing. Problem (P1) depends on $\nu_0$ only through the observed
	moments $\nu_0(\mathcal{T})$, so it is something we can actually solve. Problem (P2) is the one we would
	really like to solve (it returns the best exponential-family approximation of $\nu_0$) but
	it is out of reach on its own, since evaluating $\KL(\nu_0, \cdot)$ requires the very measure
	$\nu_0$ we are trying to reconstruct. What saves us is that, fortunately, the two coincide: the solution of the feasible problem
	(P1) is also the solution of the (a priori infeasible) problem (P2). Problem (P1) admits at most one solution
	$\nu_{\bar\theta} \in \Ekl \cap \MT{\nu_0}$. It always exists when $\X$ is finite: the moment vector of a
	measure $\nu_0$ with full support lies in the relative interior of the convex hull of
	$\{\mathcal{T}(x)\}_{x \in \X}$, which is exactly the image of the moment map
	$\theta \mapsto \nu_\theta(\mathcal{T})$ of $\Ekl$ \citep[see e.g.][]{barndorff1978information,brown1986fundamentals}.
	On infinite spaces it may fail to exist (see Remark~\ref{rem:finite-spaces}). Whenever it exists, the
	following classical identity shows that this
	same $\nu_{\bar\theta}$ minimizes $\KL(\nu_0, \cdot)$ over $\Ekl$ \citep{csiszar2004information}.
	
	\begin{lemma}[Pythagorean identity]\label{lem:pyth}
		For every $\nu_\theta \in \Ekl$,
		\begin{equation}\label{eq:pythagore}
			\KL(\nu_0, \nu_\theta) \;=\; \KL(\nu_0, \nu_{\bar\theta}) + \KL(\nu_{\bar\theta}, \nu_\theta)\,.
		\end{equation}
	\end{lemma}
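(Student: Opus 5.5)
The plan is to expand both sides using the explicit log-density of members of $\Ekl$, and to see that the difference depends on $\nu_0$ only through its moments $\nu_0(\mathcal{T})$.

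Write $\dd\nu_\theta/\dd\lambda = \exp(\theta^\top\mathcal{T})$. Normalization is absorbed through the coordinate $T^0\equiv 1$, so $\theta_0$ plays the role of the log-partition term. For $\nu_0$ equivalent to $\lambda$ (or at least with $\KL(\nu_0,\lambda)<\infty$ and $\mathcal{T}$ integrable under $\nu_0$), I would start from
\[
\KL(\nu_0,\nu_\theta) = \int_{\X} \ln\frac{\dd\nu_0}{\dd\lambda}\dd\nu_0 - \int_{\X} \theta^\top\mathcal{T}\dd\nu_0 = \KL(\nu_0,\lambda) - \theta^\top \nu_0(\mathcal{T}).
\]
Applying the same formula with $\nu_0$ replaced by $\nu_{\bar\theta}$ gives
\[
\KL(\nu_{\bar\theta},\nu_\theta) = \KL(\nu_{\bar\theta},\lambda) - \theta^\top\nu_{\bar\theta}(\mathcal{T}),
\]
and applying it with $\theta$ replaced by $\bar\theta$ gives
\[
\KL(\nu_0,\nu_{\bar\theta}) = \KL(\nu_0,\lambda) - \bar\theta^\top\nu_0(\mathcal{T}).
\]
I would also record that $\KL(\nu_{\bar\theta},\lambda) = \bar\theta^\top\nu_{\bar\theta}(\mathcal{T})$.

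Next I add the last two expressions and use the moment-matching property $\nu_{\bar\theta}(\mathcal{T}) = \nu_0(\mathcal{T})$, which holds because $\nu_{\bar\theta}\in\MT{\nu_0}$. This gives
\[
\KL(\nu_0,\lambda) - \bar\theta^\top\nu_0(\mathcal{T}) + \bar\theta^\top\nu_0(\mathcal{T}) - \theta^\top\nu_0(\mathcal{T}) = \KL(\nu_0,\lambda) - \theta^\top\nu_0(\mathcal{T}),
\]
which is exactly $\KL(\nu_0,\nu_\theta)$. That proves \eqref{eq:pythagore}. The cancellation uses only that the log-likelihood ratio $\ln(\dd\nu_{\bar\theta}/\dd\nu_\theta) = (\bar\theta-\theta)^\top\mathcal{T}$ is a linear combination of the constraint functions. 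Equivalently, the identity can be written as
\[
\int \ln\frac{\dd\nu_{\bar\theta}}{\dd\nu_\theta}\dd(\nu_0-\nu_{\bar\theta}) = (\bar\theta-\theta)^\top\bigl(\nu_0(\mathcal{T})-\nu_{\bar\theta}(\mathcal{T})\bigr) = 0.
\]
I would present the proof in this compact form: expand $\KL(\nu_0,\nu_\theta)-\KL(\nu_0,\nu_{\bar\theta}) = \nu_0\bigl((\bar\theta-\theta)^\top\mathcal{T}\bigr)$, then replace $\nu_0$ by $\nu_{\bar\theta}$ using matched moments, and recognize the result as $\KL(\nu_{\bar\theta},\nu_\theta)$.

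The main obstacle is integrability and the handling of infinite values, not the algebra. The splitting of $\int\ln(\dd\nu_0/\dd\nu_\theta)\dd\nu_0$ into two integrals needs $\theta^\top\mathcal{T}\in L^1(\nu_0)$. This holds because the moments $\nu_0(T^j)$ are assumed to exist, since they are the observed data. If $\KL(\nu_0,\lambda)=+\infty$, both sides of \eqref{eq:pythagore} equal $+\infty$, because the finite term $\theta^\top\nu_0(\mathcal{T})$ cannot offset it. In that case I would state the identity in $[0,+\infty]$.

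For $\nu_{\bar\theta}$ and $\nu_\theta$, the relevant integrals are finite: $\mathcal{T}$ is $\nu_{\bar\theta}$-integrable because $\nu_{\bar\theta}$ has the same moments as $\nu_0$. As a corollary, $\KL(\nu_0,\nu_\theta)\ge \KL(\nu_0,\nu_{\bar\theta})$, with equality only when $\nu_\theta=\nu_{\bar\theta}$, which gives (P1)=(P2).
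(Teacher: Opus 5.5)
Your proof is correct, including the integrability bookkeeping and the case $\KL(\nu_0,\lambda)=+\infty$. The paper gives no proof of its own here and simply cites Csisz\'ar's classical result. Your key mechanism is the vanishing cross term in the paper's Appendix~A three-point argument: $\ln(\dd\nu_{\bar\theta}/\dd\nu_\theta)=(\bar\theta-\theta)^\top\mathcal{T}$ lies in $\spn(\mathcal{T})$, so it integrates to zero against $\nu_0-\nu_{\bar\theta}$ by moment matching. So this is essentially the same approach.
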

	
	Since the second term of~\eqref{eq:pythagore} is nonnegative and vanishes only at
	$\nu_\theta = \nu_{\bar\theta}$, the $\KL$-projection of $\nu_0$ onto $\Ekl$ is nothing but the
	element of $\Ekl$ that matches the moments of $\nu_0$: the only way to reach the best
	approximation (P2) is through the moment-matching problem (P1).
	This property is illustrated in Figure~\ref{KL}.
	
	\begin{remark}
		Concretely, (P1) is nothing but a finite-dimensional system of $k$ scalar moment equations in the
		unknown $\theta$, namely $\nu_\theta(\mathcal{T}) = \nu_0(\mathcal{T})$. What could have been an optimization over an
		infinite-dimensional space of measures becomes the resolution of $k$ scalar equations.
	\end{remark}
	
	The consequence that matters most to us is the inequality
	\begin{equation}\label{eq:bound}
		\KL(\nu_0, \nu_{\bar\theta}) \;\le\; \KL(\nu_0, \nu_\theta)
		\qquad \text{for every } \nu_\theta \in \Ekl\,.
	\end{equation}
	Any $\nu_\theta \in \Ekl$ has $\ln \dd\nu_\theta/\dd\lambda = \theta^{\top} \mathcal{T}$, so choosing such a
	comparison point amounts to choosing a linear combination of the basis functions $T^{1}, \dots, T^{k}$
	to approximate the log-density $\ln \dd\nu_0/\dd\lambda$. The reconstruction error
	$\KL(\nu_0, \nu_{\bar\theta})$ is then bounded by the right-hand side of~\eqref{eq:bound}, hence
	controlled entirely by how well $\ln \dd\nu_0/\dd\lambda$ can be approximated by such linear
	combinations, without ever computing $\nu_{\bar\theta}$ itself. This is one of the main
	mechanisms behind the moment-matching reconstruction bounds.
	
	\begin{figure}[H]
		\centering
		\includegraphics[width=0.48\textwidth]{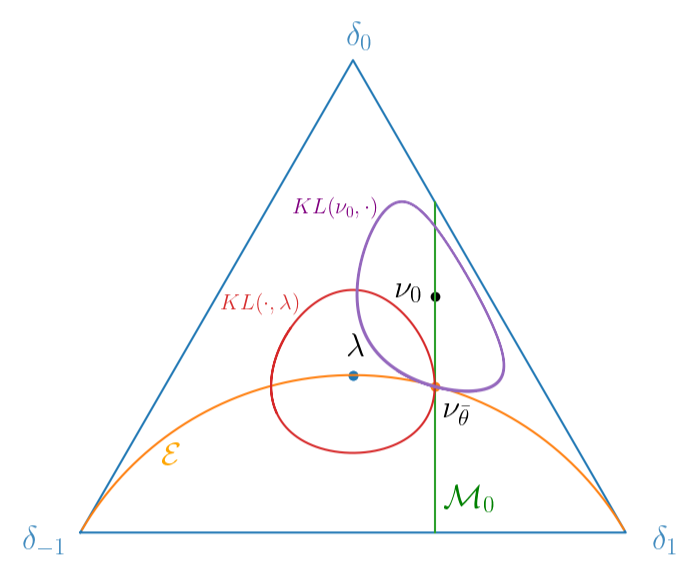}\hfill
		\includegraphics[width=0.48\textwidth]{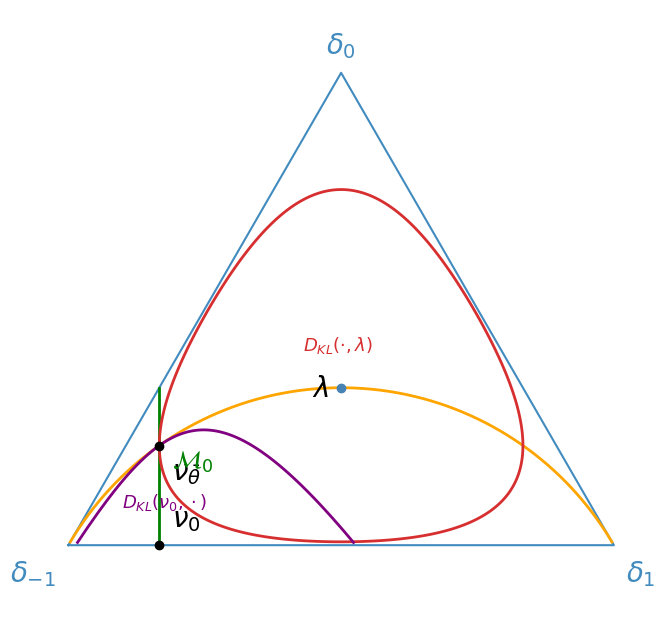}
		\caption{An example with $\X = \{-1,0,1\}$ and the free family $(1,x)$. The exponential family
			$\mathcal{E} = \Ekl$ is in orange and $\mathcal{M}_0 = \MT{\nu_0}$ is in green. The point
			$\nu_{\bar\theta}$ simultaneously satisfies (P1) and (P2).}
		\label{KL}
	\end{figure}
	
	\subsection{Beyond Kullback--Leibler}
	
	The specific choice of Kullback--Leibler among all possible $f$-divergences appeared above
	in two different places: to define what ``closest to $\lambda$'' means in~\eqref{eq:maxent} (the
	inductive bias of the reconstruction), and what ``closest to $\nu_0$'' means in (P2) (the quantity
	we ultimately bound). Neither use singles out KL a priori and any divergence could stand in
	either place. It is therefore natural to ask how much of the construction survives across this
	larger family, and what singles out Kullback--Leibler.
	
	\begin{definition}\label{def:fdiv}
		A function $f : \R^*_+ \to \R$ defines an \emph{$f$-divergence} if it is strictly
		convex on $\R^*_+$ and $f(1) = 0$. The associated divergence is
		\[
		\Df(\nu, \mu) =
		\begin{cases}
			\int_{\X} f\!\Bigl(\tfrac{\dd\nu}{\dd\mu}\Bigr) \dd\mu, & \text{if } \nu \ll \mu, \\[4pt]
			+\infty, & \text{otherwise.}
		\end{cases}
		\]
	\end{definition}
	\begin{definition}\label{def:eqfdiv}
		We  will say that $f$ is the \emph{generator} of $\Df$, and that functions $f, g$ are \emph{equivalent}, written $f \sim g$, if they define the same $f$-divergence up to a multiplicative constant, that is if $\Df$ is proportional to $D_g$.
	\end{definition}
	
	\begin{remark}\label{rem:equiv}
		Adding an affine term leads to an equivalent generator: for $A>0$ and $B \in \R$, if $g(x) = Af(x) + B(x-1)$ then $g \sim f$.
	\end{remark}
	
	\paragraph{The $f$-exponential family.}
	For a $C^2$ generator $f$, strict convexity on $\R_+^*$ makes $f'$ a strictly increasing
	homeomorphism from $\R_+^*$ onto an open interval
	\(
	f'(\R_+^*) \;\subseteq\; \R
	\) (a $C^1$-diffeomorphism wherever $f''$ does not vanish). The same Lagrange computation as
	in~\eqref{eq:maxent} shows that the $\Df$-constrained minimizer has density
	$(f')^{-1}(\theta^{\top} \mathcal{T})$ with respect to $\lambda$, provided $\theta$ is such
	that $\theta^{\top}\mathcal{T}(x) \in f'(\R_+^*)$ for $\lambda$-a.e.\ $x \in \X$. Thus $\theta$ must belong to the set
	\[
	\Theta_f \;:=\; \bigl\{\, \theta \in \R^{k+1} : \theta^{\top}\mathcal{T}(x) \in f'(\R_+^*) \ \text{for } \lambda\text{-a.e.\ } x \in \X \,\bigr\},
	\]
	which is convex since $f'(\R_+^*)$ is an interval, and on which $(f')^{-1}(\theta^{\top}\mathcal{T})$
	is a well-defined, $\lambda$-a.e.\ positive function. The $f$-exponential family is then
	\[
	\Ef \;=\; \Bigl\{\, \nu_\theta \in \Mset : \tfrac{\dd\nu_\theta}{\dd\lambda} = (f')^{-1}(\theta^{\top} \mathcal{T}),\ \theta \in \Theta_f,\ \textstyle\int_{\X} (f')^{-1}(\theta^{\top}\mathcal{T})\,\dd\lambda = 1 \,\Bigr\},
	\]
	obtained from $\Ekl$ by replacing $\exp = (f'_{\KL})^{-1}$ with $(f')^{-1}$, and $\R^{k+1}$ with
	$\Theta_f$; the integrability condition $\int_\X (f')^{-1}(\theta^{\top}\mathcal{T})\,\dd\lambda < \infty$
	for $\theta \in \Theta_f$ is implicit in requiring $\nu_\theta$ to be a (normalizable) probability
	measure. In particular, taking $f(x) = x\ln x - x + 1$, so that $f'(x) = \ln x$, $f'(\R_+^*) = \R$,
	$\Theta_f = \R^{k+1}$, and $(f')^{-1} = \exp$, recovers $\Ef = \Ekl$ with no restriction on $\theta$,
	a feature of the KL generator, not shared by all $f$-divergences for which $f'(\R_+^*) \subsetneq \R$
	(e.g. reverse KL or Hellinger, cf.\ Figure~\ref{fig:counterexamples}).
	
	\paragraph{What survives.}
	The maximum-entropy ``geodesic'' survives unchanged. For \emph{any} $f$-divergence, the
	minimizer of $\Df(\cdot,\lambda)$ over the moment set is still the unique element of $\Ef$ that matches
	the moments: whenever it exists, there is a unique $\nu_{\bar\theta} \in \Ef \cap \MT{\nu_0}$, and it minimizes
	$\nu \mapsto \Df(\nu, \lambda)$ over $\MT{\nu_0}$. The construction of the maximum-entropy
	reconstruction, the family $\Ef$ on which it lives, and the reduction to a finite-dimensional convex optimization program are not specific to KL.
	
	\paragraph{What breaks down.}
	However, the projection identity~\eqref{eq:pythagore}--\eqref{eq:bound} no longer holds: in general, nothing guarantees for
	an arbitrary $f$ and an arbitrary free family $\mathcal{T}$ that the $\Df$-projection of $\nu_0$ onto
	$\Ef$ coincides with the moment matcher $\nu_{\bar\theta}$. We give examples of such failures in
	Figure~\ref{fig:counterexamples}.
	
	\begin{figure}[H]
		\centering
		\begin{minipage}{0.48\textwidth}
			\centering
			\includegraphics[width=\linewidth]{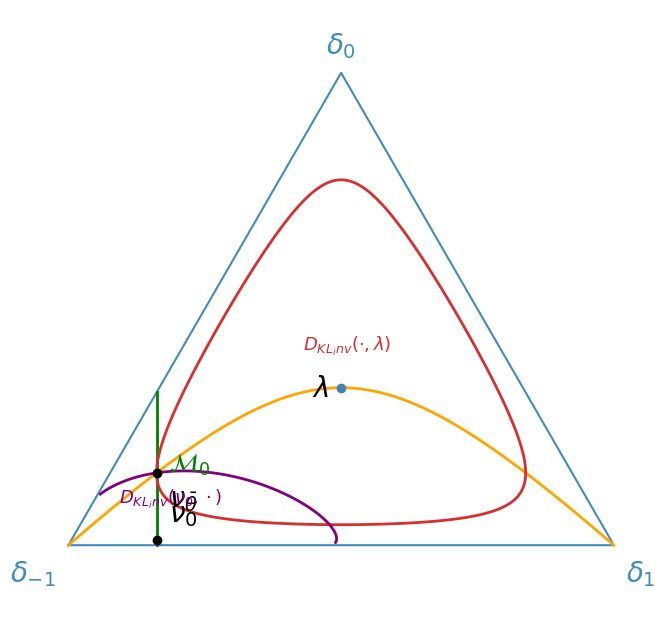}
			\par\smallskip
			\textit{Reverse KL divergence.}
		\end{minipage}
		\hfill
		\begin{minipage}{0.48\textwidth}
			\centering
			\includegraphics[width=\linewidth]{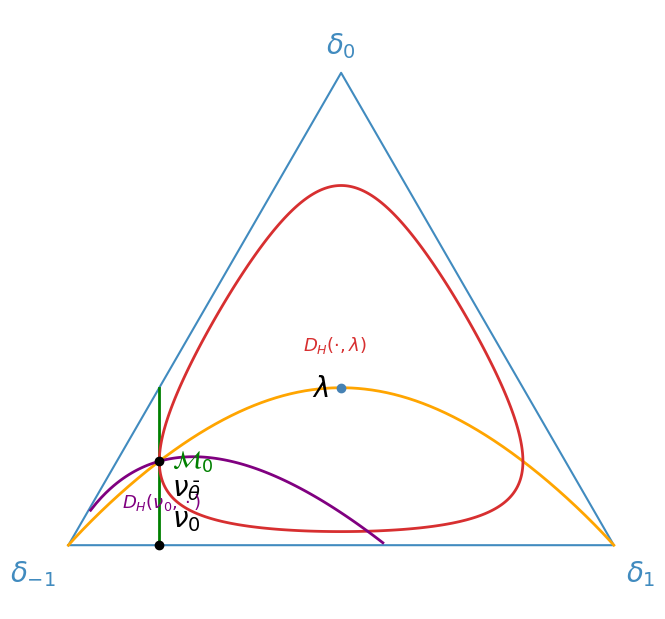}
			\par\smallskip
			\textit{Hellinger divergence.}
		\end{minipage}
		\caption{Two counterexamples on $\X = \{-1,0,1\}$ with the free family $(1,x)$, same colour code
			as Figure~\ref{KL}. The reverse KL divergence corresponds to $f(t) = -\ln t$ and the Hellinger
			divergence to $f(t) = \tfrac12(\sqrt{t}-1)^2$. In both cases the $\Df$-projection of $\nu_0$
			onto $\Ef$ no longer coincides with $\nu_{\bar\theta}$.}
		\label{fig:counterexamples}
	\end{figure}
	But are there other pairs $(f, \mathcal{T})$ for which the projection property still holds? Ideally one would
	hope for another $f$-divergence as powerful as KL (one enjoying the property for \emph{every}
	free family) or, failing that, one would at least like to identify the pairs $(f, \mathcal{T})$ for
	which it holds. Determining these pairs is the object of this paper, and we now name the property.
	
	\begin{definition}[$\mathcal{T}$-entropy-adapted]\label{def:adapted}
		Let $f$ be a $C^2$ $f$-divergence and $\mathcal{T} = (T^{0}, \dots, T^{k})$ a free family with $T^{0} \equiv 1$
		and $k \geq 1$. We say that $f$ is \emph{$\mathcal{T}$-entropy-adapted} if, for every finite space $\X = \{x_1,\dots,x_n\} \subset \R$
		with reference probability measure $\lambda$ of full support and every $\nu \in \PX$ equivalent to $\lambda$, if $\Ef \cap \MT{\nu}$ is non-empty its (necessarily unique) element $\nu_{\bar\theta}$ is such that
		\[
		\Df(\mu, \nu_\theta) \;\ge\; \Df(\mu, \nu_{\bar\theta})
		\qquad \text{for all } \mu \in \MT{\nu} \text{ and all } \nu_\theta \in \Ef\,.
		\]
	\end{definition}
	
	In words: $f$ is $\mathcal{T}$-entropy-adapted when, for any measure $\mu$ sharing the moments of $\nu$,
	the $\Df$-projection of $\mu$ onto $\Ef$ is always the single moment-matching point
	$\nu_{\bar\theta}$. This is exactly the property that made KL so usable (that the projection
	onto the family is computed by moment matching) now demanded uniformly over the whole moment
	set. Figure~\ref{fig:counterexamples} shows that the reverse KL and Hellinger divergences are
	\emph{not} $(1,x)$-entropy-adapted.
	
	\begin{remark}\label{rem:finite-spaces}
		We restrict Definition~\ref{def:adapted} to finite spaces for two reasons. First, the proof of Theorem~\ref{thm:main} only ever uses the hypothesis on finite spaces (Remark~\ref{rem:finite}), so a weaker hypothesis makes the ``only if'' direction stronger. Second, on infinite spaces the moment matcher may fail to exist even for KL: take $\X = [0,\infty)$, $T(x) = x$ and $\lambda(\dd x) \propto e^{-x}(1+x^3)^{-1}\dd x$. Then $e^{\theta x}$ is $\lambda$-integrable only for $\theta \leq 1$, so the means of the elements of $\Ekl$ are bounded by some $m^* < \infty$, while measures $\nu \sim \lambda$ with finite mean larger than $m^*$ exist. A definition quantifying over all spaces would therefore exclude KL itself. Finally, on a finite space all integrals are finite sums, and $\Ef$ is simply parametrized by $\{\theta \in \Theta_f : \sum_i \lambda_i (f')^{-1}(\theta^\top \mathcal{T}(x_i)) = 1\}$. The definition could nevertheless be extended to
		infinite spaces under additional assumptions guaranteeing the existence of the moment matcher, for
		instance bounded moment functions $\mathcal{T}$, or more generally measures $\nu$ whose moment vector
		lies in the image of the moment map of $\Ef$; Theorem~\ref{thm:main} would remain
		valid, since finite spaces are still included.
	\end{remark}
	
	Our main result is that, whatever the free family, KL is the
	\emph{only} smooth $f$-divergence with this property up to the harmless equivalence of Definition~\ref{def:eqfdiv}.
	
	\begin{theorem}\label{thm:main}
		Let $\mathcal{T} = (T^{0}, \dots, T^{k})$ be a free family with $T^{0} \equiv 1$ and $k \geq 1$, and let $f$ be
		a $C^2$ $f$-divergence. Then, we have:
		\[
		\text{$f$ is $\mathcal{T}$-entropy-adapted iff up to equivalence, }  \Df = \KL\,.
		\]
	\end{theorem}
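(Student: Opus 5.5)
The plan is to treat the two directions separately. The ``if'' direction is routine. The ``only if'' direction comes from writing the first-order (Lagrange) optimality condition of the $\Df$-projection at the moment matcher. We linearize it in $\mu$ around $\mu=\nu_{\bar\theta}$ and exploit it on small finite configurations. The goal is to force $p f''(p)$ to be constant, i.e.\ $f''(p)=c/p$ with $c>0$. Integrating twice then gives $f(p)=c\,p\ln p+B(p-1)+C$, and $f(1)=0$ fixes $C=0$, so $f\sim \KL$ by Remark~\ref{rem:equiv}.

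\emph{If.} Suppose $f(x)=c(x\ln x - x+1)+B(x-1)$ with $c>0$. Then $(f')^{-1}(s)=\exp((s-B)/c)$. Since $T^0\equiv 1$, the shift by $B$ is absorbed into $\theta_0$, so $\Ef=\Ekl$ with $\Theta_f=\R^{k+1}$. Moreover $\Df=c\,\KL$ on probability measures. Lemma~\ref{lem:pyth}, applied with $\mu$ in place of $\nu_0$ (note $\mu(\mathcal T)=\nu(\mathcal T)$), gives $\KL(\mu,\nu_\theta)\ge \KL(\mu,\nu_{\bar\theta})$, which is exactly Definition~\ref{def:adapted}.

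\emph{Only if: the optimality condition.} Fix a finite $\X=\{x_1,\dots,x_n\}$, weights $\lambda_i>0$, and $\theta\in\Theta_f$ with normalized density $p_i=h(s_i)$, where $h=(f')^{-1}$ and $s_i=\theta^\top\mathcal T(x_i)$. For $\mu$ with density $m_i$, set $r_i=m_i/p_i$; then
\[
\Df(\mu,\nu_\theta)=\sum_i\lambda_i p_i f(r_i).
\]
Adaptedness says that $\theta\mapsto\Df(\mu,\nu_\theta)$, restricted to the normalization constraint $\sum_i\lambda_i h(\theta^\top\mathcal T(x_i))=1$, is minimized at $\bar\theta=\theta$ for every $\mu\in\MT{\nu_\theta}$. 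The Lagrange condition reads
\[
\sum_i \lambda_i\bigl(\varphi(r_i)-\kappa\bigr)h'(s_i)\,T^j(x_i)=0\quad\text{for all } j,
\qquad \varphi(r)=f(r)-rf'(r),
\]
where $\kappa$ is the multiplier; we need $\Theta_f$ open near $\theta$ and $h'(s_i)=1/f''(p_i)$ finite. Now take $\mu=\nu_\theta+\varepsilon w$ with $\sum_i\lambda_i w_iT^j(x_i)=0$ for all $j$, and differentiate at $\varepsilon=0$, using $\varphi'(1)=-f''(1)\neq0$. This gives
\[
\sum_i \lambda_i w_i\, g(p_i)\,T^j(x_i)=0,\qquad g(p)=\frac{1}{p f''(p)},
\]
for every such $w$. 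Writing $V=\spn\{(T^l(x_i))_i\}$, this says that $g(p)\cdot T^j|_\X\in V$ for all $j$; for $j=0$, $g(p)|_\X\in V$.

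\emph{Only if: finite configurations.} By freeness, choose $k+1$ points where the vectors $(T^l(x_i))_l$ are independent, and add one more point so that $V\subsetneq\R^{k+2}$ is a hyperplane with normal $a$. The free choice of $\lambda$ is what makes the configurations flexible: any $s\in V\cap f'(\R_+^*)^{k+2}$ whose values $h(s_i)$ lie on both sides of $1$ is realized by a normalized $\nu_\theta$, after adjusting $\lambda$. Setting $G=g\circ h$, the condition becomes: the componentwise map $s\mapsto (G(s_i))_i$ sends an open piece of $V$ into $V$, and likewise $(G(s_i)T^j(x_i))_i\in V$. Differentiating along directions $u\in V$ yields relations of the form $\sum_i a_i T^j(x_i)u_i G'(s_i)=0$ and their second-order analogues. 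Varying the added point and the base $s$ should force $G$ affine, and then constant, hence $pf''(p)$ constant.

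\emph{Main obstacle.} The hard step is this last rigidity argument. It requires choosing the extra point so that products such as $T^jT^l$, or the vector $u\odot T^j$, fall outside $V$. Degenerate families such as $(1,\mathbf 1_A)$, whose products stay in the span, must be handled by a different choice of configuration, for instance using points that make $V$ not coordinate-decomposable. I would first try the one-parameter perturbation $s=s_0\mathbf 1+\varepsilon u$ combined with a two-level density, to get $G'=0$ at arbitrary base points; I expect the case analysis on such degenerate $\mathcal T$ to be the delicate part.
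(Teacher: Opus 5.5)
Your ``if'' direction is fine, but the ``only if'' argument has a genuine gap. You differentiate the Lagrange condition only at $\mu=\nu_{\bar\theta}$, i.e.\ at ratio $r\equiv 1$. This discards exactly the information that singles out KL, and for some generators and some families no choice of configuration can restore it.

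First, your linearized identity carries the factor $\varphi'(1)=-f''(1)$, and nothing in the hypotheses makes it non-zero. For instance, $f(x)=(x-1)^4$ is a strictly convex $C^2$ generator with $f''(1)=0$, and for it your identity reads $0=0$ at every anchor $\theta$. Moving the anchor changes the $p_i$, not the fact that the ratio $r$ stays near $1$.

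Second, even when $f''(1)\neq0$, take the free family $\mathcal T=(1,\mathbf{1}_A)$. On any finite $\X$, every $s\in V$ is constant on $A\cap\X$ and on $\X\setminus A$, hence so are $G(s)$ and $G(s)\odot T^1|_\X$. So your condition holds for every $f$ and every configuration, and points making $V$ ``not coordinate-decomposable'' do not exist for this family. The exact Lagrange condition, by contrast, says that the $\lambda$-averages of $\varphi(r)$ over the two classes coincide for all $r>0$ with $\lambda$-average $1$ on each class. This Jensen-type equality forces $\varphi$ to be affine, but it is invisible at first order around $r\equiv1$.

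There are two further problems. When you differentiate in $\varepsilon$ you drop $\kappa'(0)$. What you actually get is $g(p)\odot(T^j-\tau_j)|_\X\in V$, with $\tau_j$ the barycentre of $T^j|_\X$ for the weights $\lambda_i/f''(p_i)$, not both $g(p)\odot T^j|_\X\in V$ and $g(p)|_\X\in V$. And your rigidity step differentiates $G=g\circ(f')^{-1}$ once or twice, which needs derivatives of $f''$ that a $C^2$ generator need not have.

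The missing idea, which is the paper's route, is to keep the exact Lagrange identity at an arbitrary point $\mu=r\nu_\ast$ of the moment set of an anchor $\nu_\ast\in\Ef$ with $f''(p_i)>0$. You then differentiate it along the moment set at that arbitrary $r$. After translating each $T^j$ by $\tau_j$ (harmless since $T^0\equiv1$), the identity reads $\sum_i\lambda_i\,T^j_i\,\varphi(r_i)/f''(p_i)=0$. Differentiation gives $\bigl(g(p_i)\,T^j_i\,\varphi'(r_i)\bigr)_i\in V$, with $\varphi'(r_i)=-r_if''(r_i)$ evaluated at ratios you may prescribe, and this uses only $f\in C^2$. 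The freedom then lies in the moment set, whose dimension grows with the number of points, rather than in the $(k+1)$-dimensional family.

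Concretely, fix the anchor: $\nu_\ast=\lambda$ if $f''(1)\neq0$, and an anchor chosen by a Sard argument otherwise. Put $r=s$ on a set $P$ of points on which evaluation is injective on $V$, and $r=u$ at one test point where $T^j\neq\tau_j$. Give these points small weight, and solve the moment constraints on a second such set $Q$, where $r$ stays close to $1$. The vector $\bigl(g(p_i)\,T^j_i\,(\varphi'(r_i)-\varphi'(s))\bigr)_i$ then lies in $V$. In the non-degenerate case this is because $g(p_i)\equiv1/f''(1)$ is constant for the anchor $\lambda$; in the degenerate case one takes $s=1$, so $\varphi'(s)=0$. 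This vector vanishes on $P$, hence everywhere. At the test point this gives $\varphi'(u)=\varphi'(s)$, i.e.\ $xf''(x)$ is constant; in the degenerate case it gives $\varphi'\equiv0$, contradicting strict convexity. Families in which some value of $T$ is taken three times, such as $(1,\mathbf{1}_A)$, are handled directly by prescribing $r=s$ and $r=u$ at two points sharing that value.
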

	
	\section{Connections and implications of this result}
	\label{s:Connections}
	\subsection{Moment-based Distributional Reinforcement Learning}
	
	An application of this projection property appears in DistRL. In contrast with classical
	reinforcement learning, which focuses primarily on the expected return, DistRL aims at estimating the full distribution of the random return. Since
	this distribution is an infinite-dimensional object, practical algorithms
	must necessarily rely on finite-dimensional representations.
	
	A recent approach proposed in \citet{bringas2026momentmatching} represents
	return distributions through a finite collection of moments rather than
	through a discrete approximation of the full distribution. The motivation is
	that suitably chosen statistics, in particular polynomial moments, are
	\emph{Bellman-closed}: they can be propagated exactly through the
	distributional Bellman recursion. This avoids the repeated projection steps
	used by standard distributional reinforcement learning algorithms, and
	therefore prevents approximation errors from accumulating along the Bellman
	iterations. Once the moments of the return distribution have been computed,
	a distribution is reconstructed only once, at the end of the procedure.
	
	More precisely, let $\nu$ denote the return distribution and let
	$T=(T^1,\ldots,T^k)$ be a family of polynomial features, so that $\mathcal{T} = (1, T)$ in the notation of Section~\ref{s:Preliminaries}. The Bellman
	recursion provides the moment vector
	\[
	\nu(T)
	=
	\left(
	\int T^1\,\dd\nu,\ldots,\int T^k\,\dd\nu
	\right),
	\]
	and the final reconstruction is selected among all probability measures
	satisfying these constraints. Under appropriate regularity assumptions, it is possible to obtain an approximation guarantee that does not depend on
	the particular reconstruction rule. Any distribution $\widehat{\nu}$ that
	matches the first $k$ polynomial moments satisfies a Wasserstein bound of
	order
	\[
	W_1(\nu,\widehat{\nu})=O(k^{-1}).
	\]
	This illustrates a first fundamental advantage of moment representations:
	the matching of sufficiently many moments alone already guarantees weak
	approximation of the target distribution.
	
	However, substantially stronger guarantees are obtained when the
	reconstruction is chosen according to the maximum-entropy principle.
	Given the computed moments, the reconstruction is then
	\[
	\nu^\star
	\in
	\argmin_{\mu \in \MT{\nu}}
	\KL(\mu,\lambda),
	\]
	where $\lambda$ is a reference measure. As usual, $\nu^\star$ belongs to
	the exponential family $\Ekl$ generated by the same statistics, which we parametrize here as
	\[
	\Ekl
	=
	\left\{
	\nu_\theta:
	\frac{\dd\nu_\theta}{\dd\lambda}
	=
	\exp\!\left(
	\theta^\top T-A(\theta)
	\right)
	\right\},
	\qquad
	A(\theta) = \ln \int_{\X} \exp(\theta^\top T)\,\dd\lambda\,,
	\]
	where $A$ is the log-partition function, so that $\theta_0 = -A(\theta)$ in the parametrization
	of Section~\ref{s:Preliminaries}.
	A crucial step in the analysis of \citet{bringas2026momentmatching} is that
	this maximum-entropy reconstruction is simultaneously the information
	projection of the true return distribution onto $\Ekl$:
	\[
	\nu^\star
	=
	\argmin_{\nu_\theta\in\Ekl}
	\KL(\nu,\nu_\theta).
	\]
	Equivalently, for every $\nu_\theta\in\Ekl$,
	\[
	\KL(\nu,\nu_\theta)
	=
	\KL(\nu,\nu^\star)
	+
	\KL(\nu^\star,\nu_\theta).
	\]
	This identity makes it possible to compare the reconstruction error with any
	convenient element of the exponential family. In particular, one can choose
	$\nu_\theta$ such that $\ln(\dd\nu_\theta/\dd\lambda)$ approximates the
	log-density of the target distribution. The problem is thereby reduced to a
	classical approximation problem for $\ln(\dd\nu/\dd\lambda)$, leading to fast
	KL convergence rates under suitable regularity assumptions.
	
	Theorem~\ref{thm:main} clarifies why the Kullback--Leibler divergence plays a
	distinguished role in this argument. One could consider replacing
	maximum entropy by a generalized entropy reconstruction associated with
	another $f$-divergence,
	\[
	\nu_f^\star
	\in
	\argmin_{\mu \in \MT{\nu}}
	\Df(\mu,\lambda).
	\]
	The resulting reconstruction belongs to the corresponding generalized
	exponential family $\Ef$ of Section~\ref{s:Preliminaries}.
	Moment propagation through the Bellman recursion would remain unchanged:
	the same polynomial moments could still be computed exactly, and the
	reconstruction $\nu_f^\star$ would still match them. What is lost in
	general is the second interpretation of this reconstruction as the best
	$\Df$-approximation of the true return distribution within $\Ef$.
	
	Indeed, Theorem~\ref{thm:main} shows that, among $f$-divergences, KL is the only one for which this
	compatibility holds for every return distribution, and this whatever the family of statistics that is
	propagated, polynomial or not.
	Consequently, the Pythagorean argument used in
	\citet{bringas2026momentmatching} to transform a moment reconstruction
	problem into an approximation problem for the log-density is not a generic
	property of generalized entropy methods. Within the class of
	$f$-divergences, it is a specific feature of KL.
	
	This distinction also separates two levels of guarantees in
	moment-based DistRL. The Wasserstein convergence resulting solely from
	moment matching is independent of the entropy criterion and therefore
	extends to arbitrary reconstruction procedures. In contrast, the refined
	information-theoretic guarantees based on projection onto the associated
	finite-dimensional family exploit precisely the compatibility characterized
	by Theorem~\ref{thm:main}. Our result therefore provides a structural
	explanation for the special effectiveness of maximum entropy and KL in this
	moment-based reconstruction framework.
	
	\subsection{Exponential families and statistical inference}
	
	The projection property characterized in Theorem~\ref{thm:main} also has a
	natural interpretation in statistics, where exponential families play a
	central role. Consider a classical exponential family with density
	\[
	p_\theta(x)
	=
	\exp\!\left(
	\theta^\top T(x)-A(\theta)
	\right)
	\]
	with respect to a reference measure $\lambda$, where $T=(T^1,\ldots,T^k)$
	is the vector of canonical statistics and $A$ is the log-partition
	function. Given an arbitrary probability measure $\nu_0$, approximating
	$\nu_0$ within this family by minimizing the Kullback--Leibler divergence
	amounts to solving
	\[
	\min_{\theta}
	\KL(\nu_0,\nu_\theta).
	\]
	Up to a term independent of $\theta$, the objective can be written as
	\[
	\KL(\nu_0,\nu_\theta)
	=
	A(\theta)-\theta^\top \nu_0(T)+C(\nu_0).
	\]
	Since
	\[
	\nabla A(\theta)=\nu_\theta(T),
	\]
	the first-order optimality condition is precisely
	\[
	\nu_\theta(T)=\nu_0(T).
	\]
	Hence, the KL-best approximation of an arbitrary distribution by an
	exponential family is characterized entirely by the matching of the
	canonical moments.
	
	This identity is closely related to one of the basic properties of
	exponential families in statistical inference. The part of $\KL(\nu_0,\nu_\theta)$ that depends on $\theta$
	is the cross-entropy $-\nu_0(\ln p_\theta) = A(\theta)-\theta^\top \nu_0(T)$, which makes sense for every
	probability measure $\nu_0$, even one that is not absolutely continuous with respect to $\lambda$. If $\nu_0$ is replaced by
	the empirical measure
	\[
	\widehat{\nu}_n
	=
	\frac1n\sum_{i=1}^n \delta_{X_i},
	\]
	then minimizing this cross-entropy,
	\[
	-\widehat{\nu}_n(\ln p_\theta)
	=
	-\frac1n\sum_{i=1}^n \ln p_\theta(X_i),
	\]
	amounts to maximizing the log-likelihood of the observations. The likelihood equations therefore
	take the form
	\[
	\nu_\theta(T)
	=
	\widehat{\nu}_n(T)
	=
	\frac1n\sum_{i=1}^n T(X_i).
	\]
	Thus, in a classical exponential family, maximum likelihood, KL projection,
	and moment matching lead to the same system of equations. In particular,
	although the unknown distribution may be infinite-dimensional, its best
	approximation in the model can be determined from the finite-dimensional
	vector of expectations of the canonical statistics.
	
	The $f$-exponential families considered in this paper provide a natural
	generalization of this construction. For a generator $f$, the corresponding
	maximum-entropy family is characterized by
	\[
	f'(p_\theta)=\theta_0 + \theta^\top T,
	\qquad\text{or equivalently}\qquad
	p_\theta=(f')^{-1}(\theta_0 + \theta^\top T),
	\]
	where $\theta_0$ is fixed by the normalization constraint. One may therefore ask whether the same
	statistical simplification survives when KL is replaced by an arbitrary
	$f$-divergence: is the $\Df$-best approximation of $\nu_0$ within $\Ef$ still determined solely by the ordinary moment equations
	\(
	\nu_\theta(T)=\nu_0(T)?
	\)
	
	Theorem~\ref{thm:main} shows that, if this property is required universally,
	the answer is negative for every $f$-divergence except KL. More precisely,
	among $f$-divergences, KL is the unique one, up to equivalence, for which
	the best approximation in the family generated by the same function $f$
	can always be recovered by matching the original linear moments. The
	classical relation between the best approximation by a divergence and the moment equations 
	is therefore not a generic feature of generalized exponential families,
	but a property of the KL/exponential-family geometry.
	
	\section{Proof of Theorem~\ref{thm:main}}
	\label{s:Proof}
	
	The ``if'' direction of the theorem (KL is $\mathcal{T}$-entropy-adapted) is classical~\citep{csiszar1975divergence}: on a finite space the moment matcher exists (Section~\ref{s:Preliminaries}), and the Pythagorean identity of Lemma~\ref{lem:pyth} gives the inequality.
	We prove here that KL is (up to a multiplicative constant) the only $f$-divergence for which $f$ is
	$\mathcal{T}$-entropy-adapted.
	
	The strategy is as follows: for any generator $f$, define $h(x) \eqdef f(x) - x f'(x)$. We first show (Lemma~\ref{lem:const}) that if $h'$ is constant then $f$ is equivalent to a KL generator (the converse is immediate, see Remark~\ref{rem:KLcheck}). The whole proof then consists in extracting this constancy from the entropy-adapted
	hypothesis. The mechanism is a Lagrange computation anchored at a point $\nu_\ast$ of $\Ef$: the
	first-order optimality of the $\Df$-projection along the moment set of $\nu_\ast$ yields a linear
	identity in $h$ (Step~\ref{step:lagrange}), which an orthogonality argument upgrades to an affine
	relation between $h'$ and the moment functions (Step~\ref{step:affine}); testing this relation on
	well-chosen finite configurations then forces $h'$ to be constant. When $f''(1) \neq 0$, the anchor
	can be taken to be $\lambda$ itself and every ingredient is elementary. The real difficulty of the
	proof is the possible degeneracy $f''(1) = 0$, which makes $\theta \mapsto p_\theta$
	non-differentiable at $\theta = 0$ (i.e.\ at $\lambda$) and forces us to move the anchor, via a Sard-type argument, to a
	non-degenerate point of $\Ef$ (Lemma~\ref{lem:anchor}).
	
	Throughout this section we write $p_\theta = \dd\nu_\theta/\dd\lambda = (f')^{-1}(\theta^{\top} \mathcal{T})$
	for the density of $\nu_\theta \in \Ef$, and $T(x) \eqdef \bigl(T^1(x), \dots, T^k(x)\bigr) \in \R^k$
	for the vector of non-constant moment functions.
	
	\begin{lemma}\label{lem:const}
		Let $f$ be a $C^2$ $f$-divergence and set $h(x) = f(x) - x f'(x)$, so that $h'(x) = -x f''(x)$.
		If $h'$ is constant on $\R^*_+$ (equivalently, if $x \mapsto x f''(x)$ is constant) then,
		up to equivalence, $\Df = \KL$.
	\end{lemma}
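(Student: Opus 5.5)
The plan is to integrate the ordinary differential equation that the hypothesis imposes on $f$, and then absorb the integration constants using Remark~\ref{rem:equiv}. First, since $h(x) = f(x) - x f'(x)$ and $f$ is $C^2$, differentiating gives $h'(x) = f'(x) - f'(x) - x f''(x) = -x f''(x)$. So $h'$ constant on $\R^*_+$ is the same as $x f''(x) \equiv c$ for some constant $c \in \R$.

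Second, I pin down the sign of $c$. Strict convexity of $f$ gives $f'' \ge 0$ on $\R^*_+$, so $c \ge 0$. If $c = 0$, then $f'' \equiv 0$ and $f$ is affine. An affine function is not strictly convex, which contradicts Definition~\ref{def:fdiv}. Hence $c > 0$ and $f''(x) = c/x$ on $\R^*_+$.

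Third, I integrate twice on the connected interval $\R^*_+$. This gives $f'(x) = c\ln x + B$, and then $f(x) = c(x\ln x - x) + Bx + D$ for constants $B, D \in \R$. The normalization $f(1) = 0$ forces $-c + B + D = 0$, so $D = c - B$. Substituting,
\[
f(x) = c\,(x\ln x - x + 1) + B\,(x-1).
\]
The generator $f_{\KL}(x) = x\ln x - x + 1$ yields $D_{f_{\KL}}(\nu,\mu) = \int \bigl(\tfrac{\dd\nu}{\dd\mu}\ln\tfrac{\dd\nu}{\dd\mu} - \tfrac{\dd\nu}{\dd\mu} + 1\bigr)\dd\mu = \KL(\nu,\mu)$ on probability measures, because $\int \tfrac{\dd\nu}{\dd\mu}\dd\mu = 1$. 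We therefore have $f = A f_{\KL} + B(x-1)$ with $A = c > 0$, and Remark~\ref{rem:equiv} gives $f \sim f_{\KL}$, that is $\Df = c\,\KL$. For completeness, the affine term contributes $B\int(\tfrac{\dd\nu}{\dd\mu} - 1)\dd\mu = 0$ whenever $\nu \ll \mu$, and both divergences equal $+\infty$ otherwise.

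There is no real obstacle here; the argument is elementary. The only points that need care are excluding the degenerate case $c = 0$ through strict convexity, and checking that the affine part and the normalizing constant really vanish on probability measures, which is exactly Remark~\ref{rem:equiv}.
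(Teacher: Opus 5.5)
Your proposal is correct and follows essentially the same route as the paper's proof: exclude the degenerate constant via strict convexity, integrate $f''(x) = c/x$ twice, use $f(1)=0$ to fix the integration constant, and conclude with Remark~\ref{rem:equiv}. The only cosmetic difference is that you compare directly with the normalized generator $x\ln x - x + 1$ rather than $x \ln x$, and you spell out why the affine term integrates to zero.
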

	
	\begin{proof}
		Write $h'(x) = -x f''(x) = -A$ for all $x > 0$. If $A = 0$ then $f'' \equiv 0$ and $f$ is affine,
		contradicting strict convexity; convexity of $f$ then forces $A > 0$. Integrating
		$f''(x) = A/x$ twice, there exist $C, D \in \R$ such that
		\[
		f(x) = A\, x \ln x - A x + C x + D\,,
		\]
		and the normalization $f(1) = 0$ gives $D = A - C$, that is
		\[
		f(x) = A\, x \ln x + (C - A)(x - 1)\,.
		\]
		By Remark~\ref{rem:equiv}, $f \sim x \mapsto x \ln x$, whose divergence is exactly $\KL$.
	\end{proof}
	
	We begin with two reductions.
	
	\begin{remark}\label{rem:finite}
		By Definition~\ref{def:adapted}, the hypothesis only concerns finite spaces
		$\X = \{x_1, \dots, x_n\} \subset \R$ (distinct points). Reference measures are then vectors: we
		write $\lambda = \sum_{i=1}^n \lambda_i \delta_{x_i}$ with $\lambda_i > 0$ and
		$\sum_i \lambda_i = 1$, and $T^j_i \eqdef T^j(x_i)$, thereby identifying functions on $\X$ with
		vectors of $\R^n$. We set
		\[
		S_\X \;\eqdef\; \spn\bigl(1, T^1|_\X, \dots, T^k|_\X\bigr) \;\subset\; \R^n\,.
		\]
		Since $T^0 \equiv 1$, a probability measure $\mu$ with full support on $\X$ belongs to $\MT{\nu}$
		if and only if $\sum_i \mu_i g_i = \sum_i \nu_i g_i$ for every $g \in S_\X$: the moment sets depend
		on $\mathcal{T}$ only through $S_\X$, and the family $\mathcal{T}$ need not be free on $\X$. Every
		integral is a finite sum, so all the differentiations under the integral sign performed below are
		automatic.
	\end{remark}
	
	\begin{remark}\label{rem:translation}
		Let $c = (c_1, \dots, c_k) \in \R^k$. Translating each $T^j$ by the constant $c_j$ changes neither
		the moment sets (all measures involved being probability measures) nor the family $\Ef$, since
		$\spn(1, T^1 - c_1, \dots, T^k - c_k) = \spn(1, T^1, \dots, T^k)$. Hence, if $f$ is
		$\mathcal{T}$-entropy-adapted, it is also entropy-adapted for the translated family, for
		\emph{every} $c \in \R^k$. We will use this freedom with $c = \tau$, a barycentre attached to the
		configurations constructed below.
	\end{remark}
	
	\paragraph{Notation.}
	From now on we normalize $f'(1) = 0$, which by Remark~\ref{rem:equiv} changes neither $\Df$, nor
	$\Ef$, nor $f''$. We write
	\[
	Z \eqdef \{x > 0 : f''(x) = 0\}\,.
	\]
	By strict convexity, $Z$ is closed with empty interior (on any interval where $f'' \equiv 0$ the
	function $f$ would be affine). The derivative $f'$ is continuous and strictly increasing, hence a
	homeomorphism from $\R^*_+$ onto the \emph{open} interval $I \eqdef f'(\R^*_+)$, which contains
	$0 = f'(1)$; consequently $f'(Z)$ is closed in $I$, and the inverse
	$(f')^{-1} : I \to \R^*_+$ is continuous, increasing, and $C^1$ on the open set
	$I \setminus f'(Z)$, with
	\[
	\bigl((f')^{-1}\bigr)' \;=\; \frac{1}{f'' \circ (f')^{-1}}
	\]
	there, by the inverse function theorem. Finally, by Sard's theorem in dimension
	one\footnote{Elementary in dimension one: fix a compact interval $[a,b] \subset \R^*_+$ and
		$\varepsilon > 0$. By uniform continuity of $f''$ on $[a,b]$, there is $\delta > 0$ such that
		$|f''| \le \varepsilon$ at every point of $[a,b]$ at distance $\le \delta$ from $Z$ (recall
		$f'' = 0$ on $Z$). Split $[a,b]$ into at most $(b-a)/\delta + 1$ intervals of length $\le \delta$:
		each one meeting $Z$ is made of such points, so $f'$ has oscillation $\le \varepsilon\delta$ on
		it, and $f'(Z \cap [a,b])$ has outer measure at most $\varepsilon(b - a + \delta)$. Let
		$\varepsilon \to 0$, then exhaust $\R^*_+$ by compact intervals.}, the set $f'(Z)$ of critical
	values of $f'$ is Lebesgue-negligible. Note right away that when $f''(1) \neq 0$ none of this
	machinery will be needed: it only serves to handle the degenerate case $f''(1) = 0$.
	
	\paragraph{Anchored configurations.}
	By an \emph{anchored configuration} we mean the data of a finite space
	$\X = \{x_1, \dots, x_n\}$, of weights $\lambda_i > 0$ with $\sum_i \lambda_i = 1$, and of a
	parameter $\theta^\ast \in \R^{k+1}$ such that, setting
	$m_i \eqdef \theta^\ast_0 + \sum_{j=1}^k \theta^\ast_j T^j_i$ and $p_i \eqdef (f')^{-1}(m_i)$,
	\[
	m_i \in I\,, \qquad f''(p_i) > 0\,, \qquad \text{and} \qquad \sum_{i=1}^n \lambda_i\, p_i = 1\,.
	\]
	The first two conditions mean $m_i \in I \setminus f'(Z)$. The last one says that
	$p_\ast \eqdef (p_1, \dots, p_n)$ is a probability density with respect to $\lambda$; since
	$p_\ast = (f')^{-1}(\theta^{\ast\top} \mathcal{T})$, the measure $\nu_\ast \eqdef p_\ast \lambda$ then
	belongs to $\Ef$, and we write $\nu^\ast_i \eqdef \lambda_i\, p_i > 0$ for its weights. The following
	lemma provides anchored configurations on any prescribed finite set of points; its Sard argument is
	only needed when $f''(1) = 0$, as Remark~\ref{rem:trivialanchor} below explains.
	
	\begin{lemma}[Existence of a non-degenerate anchor]\label{lem:anchor}
		Let $y_1, \dots, y_N \in \R$ be distinct points, $j \in \{1, \dots, k\}$, and fix two indices
		$\kappa, \Lambda$ with $T^j(y_\kappa) < T^j(y_\Lambda)$. There exist $\theta^\ast_0 \in \R$ and
		$\theta^\ast_j > 0$ such that, setting all the other coordinates of $\theta^\ast$ to zero,
		$m_i \eqdef \theta^\ast_0 + \theta^\ast_j T^j(y_i)$ and $p_i \eqdef (f')^{-1}(m_i)$:
		\begin{itemize}
			\item[(i)] $m_i \in I \setminus f'(Z)$ for every $i \in \{1, \dots, N\}$; in particular
			$f''(p_i) > 0$, and $\theta \mapsto p_{\theta,i} \eqdef (f')^{-1}\bigl(\theta_0 + \sum_l \theta_l T^l(y_i)\bigr)$
			is $C^1$ in a neighbourhood of $\theta^\ast$;
			\item[(ii)] $p_\kappa < 1 < p_\Lambda$.
		\end{itemize}
		Moreover, let $\X$ be a subset of $\{y_1, \dots, y_N\}$ containing $y_\kappa$ and $y_\Lambda$, and
		split its other indices into two disjoint sets $A$ and $B$. There exists $\delta > 0$ such that, for
		all $(\epsilon, \eta) \in [0, \delta]^2$, the solution $(\lambda_\kappa, \lambda_\Lambda)$ of
		\[
		\lambda_\kappa + \lambda_\Lambda = 1 - \epsilon\, |A| - \eta\, |B|\,, \qquad
		\lambda_\kappa\, p_\kappa + \lambda_\Lambda\, p_\Lambda = 1 - \epsilon \sum_{i \in A} p_i - \eta \sum_{i \in B} p_i
		\]
		has positive coordinates and depends continuously on $(\epsilon, \eta)$. In particular, for
		$(\epsilon, \eta) \in (0, \delta]^2$, the weights $\lambda_i = \epsilon$ ($i \in A$),
		$\lambda_i = \eta$ ($i \in B$), $\lambda_\kappa$, $\lambda_\Lambda$, together with $\theta^\ast$,
		turn $\X$ into an anchored configuration.
	\end{lemma}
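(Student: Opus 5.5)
We prove Lemma~\ref{lem:anchor} by choosing $\theta^\ast$ in a parameter family where the values $m_i$ are affine functions of $(\theta_0,\theta_j)$, and then using the negligibility of $f'(Z)$ (Sard) to avoid the critical values. Write $t_i \eqdef T^j(y_i)$, so that $t_\kappa < t_\Lambda$.

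First fix a target pair. Since $1 \in \R^*_+$ and $(f')^{-1}$ is continuous and increasing onto $\R^*_+$, we pick numbers $a<0<b$ in $I$ and plan to take $m_\kappa$ near $a$ and $m_\Lambda$ near $b$. Any such values give $p_\kappa < 1 < p_\Lambda$, because $(f')^{-1}(0)=1$ by the normalization $f'(1)=0$.

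Next, solve for the parameters. The affine map $(\theta_0,\theta_j)\mapsto(\theta_0+\theta_j t_\kappa,\ \theta_0+\theta_j t_\Lambda)$ is a linear bijection of $\R^2$. Its preimage of a small open square $U$ around $(a,b)$ is therefore an open set $V$ of parameters, and on $V$ we have $\theta_j = (m_\Lambda - m_\kappa)/(t_\Lambda - t_\kappa) > 0$.

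Then handle the other points. Every $m_i = \theta_0 + \theta_j t_i$ is an affine combination of $(m_\kappa, m_\Lambda)$:
\[
m_i = m_\kappa + \tfrac{t_i - t_\kappa}{t_\Lambda - t_\kappa}\,(m_\Lambda - m_\kappa).
\]
We need two things for each $i$.
\begin{itemize}
\item $m_i \in I$. To secure this, shrink $U$ and choose $a,b$ close to $0$. Since $0 \in I$ and $I$ is open, all $m_i$ with bounded coefficients stay in $I$ as $(a,b)\to(0,0)$. The sign constraints $a<0<b$ are kept throughout.
\item $m_i \notin f'(Z)$. For each fixed $i$, the set $\{(m_\kappa,m_\Lambda)\in U : m_i \in f'(Z)\}$ is the preimage of a Lebesgue-null subset of $\R$ under a nonconstant affine map, or, when $t_i=t_\kappa$, under a coordinate projection. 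Such a preimage is null in $\R^2$. The finite union over $i$ is therefore null, so its complement in $U$ is nonempty; in fact it is dense.
\end{itemize}
Pick $\theta^\ast$ from this complement. Then $f''(p_i)>0$ at every $i$. Since $(f')^{-1}$ is $C^1$ on the open set $I\setminus f'(Z)$, each $p_{\theta,i}$ is $C^1$ near $\theta^\ast$, where all the $m_i$ move continuously. This gives (i) and (ii).

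For the weights, the $2\times 2$ system in $(\lambda_\kappa,\lambda_\Lambda)$ has determinant $p_\Lambda - p_\kappa > 0$. Cramer's rule therefore gives a solution that is continuous, indeed affine, in $(\epsilon,\eta)$.

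At $(\epsilon,\eta)=(0,0)$ the system reads $\lambda_\kappa+\lambda_\Lambda=1$ and $\lambda_\kappa p_\kappa+\lambda_\Lambda p_\Lambda=1$. Its solution is
\[
\lambda_\kappa = \frac{p_\Lambda-1}{p_\Lambda-p_\kappa}>0, \qquad \lambda_\Lambda = \frac{1-p_\kappa}{p_\Lambda-p_\kappa}>0,
\]
and both are positive by (ii). By continuity there is $\delta>0$ keeping both coordinates positive on $[0,\delta]^2$.

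With all weights positive, they sum to $1$ and satisfy $\sum\lambda_i p_i=1$ by construction. Together with (i), this is exactly the definition of an anchored configuration.

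The main obstacle is the simultaneous avoidance of $f'(Z)$ at all $N$ points while keeping $\theta_j>0$ and the sign pattern. The Lebesgue-null preimage argument in two parameters is the step to check carefully, especially the degenerate indices with $t_i\in\{t_\kappa,t_\Lambda\}$.
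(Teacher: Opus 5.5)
Your proof is correct and follows essentially the paper's route: an affine choice of $(\theta^\ast_0,\theta^\ast_j)$ forcing $m_\kappa<0<m_\Lambda$, smallness to keep every $m_i$ in $I$, negligibility of $f'(Z)$ to avoid critical values, and the same Cramer-plus-continuity argument for the weights. The only difference is the dimension of the avoidance argument. The paper freezes a small $\theta^\ast_j>0$ and varies $\theta^\ast_0$ alone over $\bigl(-\theta^\ast_j T^j(y_\Lambda),-\theta^\ast_j T^j(y_\kappa)\bigr)$, so each bad set is just a translate $f'(Z)-\theta^\ast_j T^j(y_i)$ of a null subset of $\R$. Your two-parameter version instead needs the Fubini-type fact that preimages of null sets under the nonzero linear forms $(m_\kappa,m_\Lambda)\mapsto m_i$ are null in $\R^2$; this fact is valid, so the difference is cosmetic.
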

	
	\begin{proof}
		Write $\theta_0$, $\theta_1$ for $\theta^\ast_0$, $\theta^\ast_j$, and $T_i$ for $T^j(y_i)$. Fix
		$\theta_1 > 0$, to be shrunk below. Since $(f')^{-1}$ is increasing with $(f')^{-1}(0) = 1$,
		condition (ii) reads $m_\kappa < 0 < m_\Lambda$, i.e.
		\[
		\theta_0 \in J \eqdef \bigl(-\theta_1 T_\Lambda,\, -\theta_1 T_\kappa\bigr)\,,
		\]
		a non-empty open interval, of length $\theta_1 (T_\Lambda - T_\kappa)$. For $\theta_0 \in J$ one has
		$|\theta_0| \le \theta_1 \max(|T_\kappa|, |T_\Lambda|)$, hence
		\[
		\max_i |m_i| \;\le\; \theta_1 \Bigl( \max(|T_\kappa|, |T_\Lambda|) + \max_i |T_i| \Bigr)\,;
		\]
		since $I$ is open and contains $0$, taking $\theta_1$ small enough guarantees $m_i \in I$ for
		every $i$ and every $\theta_0 \in J$. With $\theta_1$ now fixed, the failure of the first part of
		(i) means $\theta_0 \in \bigcup_i \bigl(f'(Z) - \theta_1 T_i\bigr)$, a finite union of
		Lebesgue-null sets, hence a null set; since $J$ has positive measure, we may pick $\theta_0 \in J$
		outside it, so that $m_i \notin f'(Z)$ for every $i$. This proves (i)--(ii): the strict positivity
		$f''(p_i) > 0$ follows from $f'' \ge 0$ (convexity) together with $p_i \notin Z$, and the $C^1$
		regularity of $\theta \mapsto p_{\theta,i}$ near $\theta^\ast$ from the inverse function theorem at
		the non-critical point $m_i$.
		
		For the weights, the linear system in $(\lambda_\kappa, \lambda_\Lambda)$ has determinant
		$p_\Lambda - p_\kappa > 0$, independent of $(\epsilon, \eta)$, so its solution is continuous (indeed
		affine) in $(\epsilon, \eta)$. At $(\epsilon, \eta) = (0, 0)$ it equals
		\[
		(\lambda_\kappa, \lambda_\Lambda) \;=\;
		\Bigl( \frac{p_\Lambda - 1}{p_\Lambda - p_\kappa},\; \frac{1 - p_\kappa}{p_\Lambda - p_\kappa} \Bigr)\,,
		\]
		whose coordinates are positive by (ii); by continuity, they remain positive on $[0, \delta]^2$ for
		some $\delta > 0$. The two equations are exactly $\sum_i \lambda_i = 1$ and
		$\sum_i \lambda_i\, p_i = 1$, which, together with (i), is the definition of an anchored
		configuration.
	\end{proof}
	
	\begin{remark}\label{rem:trivialanchor}
		When $f''(1) \neq 0$, no lemma is needed: the choice $\theta^\ast = 0$ turns \emph{any} finite
		configuration with \emph{any} positive weights into an anchored configuration. Indeed $m_i = 0$
		for every $i$, so $p_i = (f')^{-1}(0) = 1$, $f''(p_i) = f''(1) \neq 0$, and the normalization
		$\sum_i \lambda_i p_i = \sum_i \lambda_i = 1$ is automatic; the anchor is then simply
		$\nu_\ast = \lambda$ itself, with $\nu^\ast_i = \lambda_i$.
	\end{remark}
	
	\paragraph{The weights $w$, the barycentre $\tau$, and the sets $\mathcal{A}$, $\mathcal{R}$,
		$\mathcal{V}$.}
	Given an anchored configuration, define, for $i \in \{1, \dots, n\}$ and $j \in \{1, \dots, k\}$,
	\[
	w_i \eqdef -\frac{1}{h'(p_i)} = \frac{1}{p_i\, f''(p_i)} > 0\,, \qquad
	W \eqdef \sum_{i=1}^n \nu^\ast_i\, w_i\,, \qquad
	\tau_j \eqdef \frac{1}{W} \sum_{i=1}^n \nu^\ast_i\, w_i\, T^j_i\,,
	\]
	and $\tau \eqdef (\tau_1, \dots, \tau_k)$. Since all the weights $\nu^\ast_i w_i$ are positive, each
	$\tau_j$ is a convex combination of the $T^j_i$ with positive weights; in particular
	$\min_i T^j_i < \tau_j < \max_i T^j_i$ whenever $T^j$ is non-constant on $\X$. In the trivial anchor
	of Remark~\ref{rem:trivialanchor} ($f''(1) \neq 0$, $\theta^\ast = 0$), these objects are
	transparent: $w_i \equiv 1/f''(1)$ is constant and $\tau_j = \int_{\X} T^j \dd\lambda$ is nothing but
	the $\lambda$-mean of $T^j$.
	
	Whenever an anchored configuration is fixed, we tacitly replace each $T^j$ by $T^j - \tau_j$, which
	is legitimate by Remark~\ref{rem:translation}; note that replacing simultaneously $\theta^\ast_0$ by
	$\theta^\ast_0 + \sum_j \theta^\ast_j \tau_j$ leaves every $m_i$ (hence every $p_i$, $w_i$, and
	$\nu_\ast$) unchanged, while the new barycentre vanishes. \emph{We may and do therefore assume that
		$\tau = 0$.} Finally, define
	\[
	\mathcal{A}(\nu_\ast) = \Bigl\{ r \in \R^n : \textstyle\sum_i \nu^\ast_i r_i g_i = \sum_i \nu^\ast_i g_i
	\ \text{for all } g \in S_\X \Bigr\},
	\qquad
	\mathcal{R}(\nu_\ast) = \mathcal{A}(\nu_\ast) \cap (\R_+^*)^n,
	\]
	\[
	\mathcal{V}(\nu_\ast) = \Bigl\{ v \in \R^n : \textstyle\sum_i \nu^\ast_i v_i g_i = 0
	\ \text{for all } g \in S_\X \Bigr\},
	\]
	so that $r \in \mathcal{R}(\nu_\ast)$ exactly when $r\nu_\ast$ is a probability measure of
	$\MT{\nu_\ast}$. Then $\mathcal{A}(\nu_\ast)$ is an affine subspace with direction
	$\mathcal{V}(\nu_\ast)$; since $(1, \dots, 1) \in \mathcal{A}(\nu_\ast) \cap (\R_+^*)^n$ and
	$(\R_+^*)^n$ is open in $\R^n$, the set $\mathcal{R}(\nu_\ast)$ is a non-empty open subset of
	$\mathcal{A}(\nu_\ast)$. For the inner product $\langle u, z \rangle_{\nu^\ast} = \sum_i \nu^\ast_i u_i z_i$,
	one has $\mathcal{V}(\nu_\ast) = S_\X^{\perp_{\nu^\ast}}$.
	
	\begin{step}[Anchored Lagrange identity]\label{step:lagrange}
		For every anchored configuration, every $r \in \mathcal{R}(\nu_\ast)$ and every
		$j \in \{1, \dots, k\}$,
		\begin{equation}\label{eq:anchored}
			\sum_{i=1}^n \nu^\ast_i\, w_i\, T^j_i\, h(r_i) \;=\; 0\,.
		\end{equation}
	\end{step}
	
	\begin{proof}
		The measure $\nu_\ast$ belongs to $\Ef \cap \MT{\nu_\ast}$, so the uniqueness part of
		Definition~\ref{def:adapted}, applied with $\nu = \nu_\ast$, gives $\nu_{\bar\theta} = \nu_\ast$:
		the moment matcher of $\nu_\ast$ is $\nu_\ast$ itself. The adaptedness inequality then yields,
		for $\mu \eqdef r\nu_\ast \in \MT{\nu_\ast}$,
		\[
		\Df(\mu, \nu_\theta) \;\ge\; \Df(\mu, \nu_\ast) \qquad \text{for all } \nu_\theta \in \Ef\,.
		\]
		The density of $\mu$ with respect to $\lambda$ being $r p_\ast$, this says that
		$\theta^\ast$ minimizes
		\[
		\theta \;\longmapsto\; \sum_i \lambda_i\, f\!\Bigl(\frac{r_i\, p_i}{p_{\theta,i}}\Bigr)\, p_{\theta,i}
		\qquad \text{subject to} \qquad \sum_i \lambda_i\, p_{\theta,i} = 1\,,
		\]
		with $p_{\theta,i} = (f')^{-1}\bigl(\theta_0 + \sum_l \theta_l T^l_i\bigr)$, over a neighbourhood of
		$\theta^\ast$ in $\R^{k+1}$ on which every $\theta_0 + \sum_l \theta_l T^l_i$ stays in the open set
		$I \setminus f'(Z)$. Since $f''(p_i) > 0$ and $f''$ is continuous, both the objective and the
		constraint are $C^1$ near $\theta^\ast$, with
		\[
		\frac{\partial p_{\theta,i}}{\partial \theta_l} = \frac{T^l_i}{f''(p_{\theta,i})}
		\qquad (l = 0, \dots, k,\ T^0 \equiv 1)\,.
		\]
		Moreover, for any fixed $q > 0$,
		\[
		\frac{\dd}{\dd p}\Bigl[ f\!\Bigl(\frac{q}{p}\Bigr)\, p \Bigr]
		= f'\!\Bigl(\frac{q}{p}\Bigr)\Bigl(-\frac{q}{p^2}\Bigr)\, p + f\!\Bigl(\frac{q}{p}\Bigr)
		= f\!\Bigl(\frac{q}{p}\Bigr) - \frac{q}{p}\, f'\!\Bigl(\frac{q}{p}\Bigr)
		= h\!\Bigl(\frac{q}{p}\Bigr)\,.
		\]
		Applying this with $q = r_i p_i$ and evaluating at $\theta = \theta^\ast$, where
		$p_{\theta^\ast, i} = p_i$ (so that $q/p = r_i$), the partial derivatives in $\theta_l$ of the
		objective and of the constraint at $\theta^\ast$ are
		\[
		\sum_i \lambda_i\, T^l_i\, \frac{h(r_i)}{f''(p_i)}
		\qquad \text{and} \qquad
		\sum_i \lambda_i\, T^l_i\, \frac{1}{f''(p_i)}\,, \qquad l = 0, \dots, k\,.
		\]
		The constraint gradient is non-zero (its $\theta_0$-coordinate is
		$\sum_i \lambda_i / f''(p_i) > 0$) so the Lagrange multiplier rule applies: there exists
		$\alpha \in \R$ such that, for $l = 0, \dots, k$,
		\[
		\sum_i \lambda_i\, T^l_i\, \frac{h(r_i)}{f''(p_i)} = \alpha \sum_i \lambda_i\, T^l_i\, \frac{1}{f''(p_i)}\,.
		\]
		Since $\lambda_i / f''(p_i) = \lambda_i p_i \cdot \frac{1}{p_i f''(p_i)} = \nu^\ast_i w_i$, the
		equation with index $l = j \geq 1$ reads
		\[
		\sum_i \nu^\ast_i\, w_i\, T^j_i\, h(r_i)
		\;=\; \alpha \sum_i \nu^\ast_i\, w_i\, T^j_i
		\;=\; \alpha\, W \tau_j \;=\; 0\,,
		\]
		by the standing convention $\tau = 0$. (The equation with index $l = 0$ merely determines $\alpha$,
		which we do not need.)
	\end{proof}
	
	The next step is the core of the proof.
	
	\begin{step}[Anchored affine relation]\label{step:affine}
		For every anchored configuration, every $r \in \mathcal{R}(\nu_\ast)$ and every
		$j \in \{1, \dots, k\}$,
		\[
		\zeta^j(r) \;\eqdef\; \bigl(w_i\, T^j_i\, h'(r_i)\bigr)_{1 \le i \le n} \;\in\; S_\X\,,
		\]
		that is, there exist $a_r, b_{r,1}, \dots, b_{r,k} \in \R$ (depending on $j$) such that
		\[
		\forall i \in \{1, \dots, n\}, \qquad
		w_i\, T^j_i\, h'(r_i) \;=\; a_r + \sum_{l=1}^k b_{r,l}\, T^l_i\,.
		\]
	\end{step}
	
	\begin{proof}
		Let $r \in \mathcal{R}(\nu_\ast)$ and $v \in \mathcal{V}(\nu_\ast)$. Since $\mathcal{R}(\nu_\ast)$
		is open in $\mathcal{A}(\nu_\ast)$, there is $t_0 > 0$ such that $r + tv \in \mathcal{R}(\nu_\ast)$
		for all $t \in (-t_0, t_0)$, and identity~\eqref{eq:anchored} (whose coefficients
		$\nu^\ast_i w_i T^j_i$ do not depend on $r$) holds at every $r + tv$. Differentiating in $t$ at
		$t = 0$ (licit, $h$ being $C^1$ since $f$ is $C^2$),
		\[
		\sum_i \nu^\ast_i\, w_i\, T^j_i\, v_i\, h'(r_i) = 0
		\qquad \text{for all } v \in \mathcal{V}(\nu_\ast)\,.
		\]
		This says that $\zeta^j(r)$ is $\langle \cdot, \cdot \rangle_{\nu^\ast}$-orthogonal to
		$\mathcal{V}(\nu_\ast) = S_\X^{\perp_{\nu^\ast}}$, hence
		$\zeta^j(r) \in \bigl(S_\X^{\perp_{\nu^\ast}}\bigr)^{\perp_{\nu^\ast}} = S_\X$. This concludes.
	\end{proof}
	
	\begin{remark}\label{rem:KLcheck}
		As a sanity check, for the KL generator $f(x) = x \ln x - x + 1$ (which satisfies $f'(1) = 0$)
		one has $h' \equiv -1$ and $w_i = 1/(p_i \cdot \tfrac{1}{p_i}) = 1$, so that
		$\zeta^j(r) = -T^j|_\X \in S_\X$ whatever $r$: as it must (KL being
		$\mathcal{T}$-entropy-adapted) Step~\ref{step:affine} carries no information in that case. It is
		the possible non-constancy of $h'$ for other generators that the relation will now be played
		against. For $k = 1$, Step~\ref{step:affine} reads $w_i T_i h'(r_i) = a_r + b_r T_i$.
	\end{remark}
	
	This last step lets us conclude, by testing its conclusion on well-chosen configurations. It rests
	on the following elementary fact: \emph{if $P \subset \{1, \dots, n\}$ is such that the restriction
		map $S_\X \to \R^P$ is injective, then any element of $S_\X$ vanishing on $P$ is zero.} The next
	lemma provides such sets of points, including for pathological moment functions.
	
	\begin{lemma}[Choice of points]\label{lem:points}
		Let $F = \spn(1, T^1, \dots, T^k)$, viewed as a space of functions on $\R$, let $F_0 \subset F$ be
		the subspace of its finitely supported elements, and let $B$ be the union of the supports of the
		elements of $F_0$. Then $B$ is finite. Let $F'$ be the space of restrictions to $\R \setminus B$ of
		the elements of $F$, and $d \eqdef \dim F' \ge 1$. Then:
		\begin{itemize}
			\item[(a)] for every finite $C \subset \R \setminus B$, the restriction map from $F'$ to functions
			on $\R \setminus (B \cup C)$ is injective;
			\item[(b)] for every finite $C \subset \R \setminus B$, there exist $d$ points
			$y_1, \dots, y_d \in \R \setminus (B \cup C)$ such that $g \mapsto (g(y_1), \dots, g(y_d))$ is a
			bijection from $F'$ onto $\R^d$;
			\item[(c)] $d = 1$ if and only if every $T^j$ is constant on $\R \setminus B$.
		\end{itemize}
	\end{lemma}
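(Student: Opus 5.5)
The plan is to settle finiteness of $B$ by a basis argument, then to reduce (a) to the definition of $F_0$, (b) to a duality statement about evaluation functionals, and (c) to a dimension count. The only mild subtlety is that $B$ is defined as a union over the possibly infinite set $F_0$, and this is handled in the first step.

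\emph{Finiteness of $B$.} I would use that $F$ is finite-dimensional: it is spanned by $k+1$ functions, so $\dim F\le k+1$. Hence its subspace $F_0$ is finite-dimensional too and has a basis $g_1,\dots,g_m$ of finitely supported functions. Any $g\in F_0$ is a linear combination of the $g_i$, so its support lies in $\bigcup_i \operatorname{supp} g_i$. Therefore $B=\bigcup_i\operatorname{supp} g_i$, which is a finite union of finite sets.

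\emph{Proof of (a).} Let $C\subset\R\setminus B$ be finite and let $g\in F$ vanish on $\R\setminus(B\cup C)$. I will check that $g$ vanishes on $\R\setminus B$. The support of $g$ is contained in the finite set $B\cup C$, so $g\in F_0$ by definition. Then $\operatorname{supp} g\subset B$, so $g$ vanishes on $\R\setminus B$, which means its image in $F'$ is zero. Since every element of $F'$ is the restriction of some $g\in F$, this is exactly injectivity of the restriction map from $F'$ to functions on $E\eqdef\R\setminus(B\cup C)$.

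\emph{Proof of (b).} By (a), $F'$ is identified with a $d$-dimensional space $F'|_E$ of functions on $E$. The evaluation functionals $\{\mathrm{ev}_y\}_{y\in E}$ span the dual $(F'|_E)^*$. Indeed, if they spanned a proper subspace, there would be a nonzero $g\in F'|_E$ annihilated by every $\mathrm{ev}_y$, i.e.\ vanishing identically on $E$, which is absurd. So I can extract $d$ linearly independent evaluations $\mathrm{ev}_{y_1},\dots,\mathrm{ev}_{y_d}$, which can be done greedily, adding one point at a time. The map $g\mapsto(g(y_1),\dots,g(y_d))$ is then a linear map $F'\to\R^d$ whose components are linearly independent, so it is surjective. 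Being a surjective linear map between spaces of equal dimension $d$, it is a bijection. The points $y_i$ are distinct, since repeated points would give dependent evaluations.

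\emph{Proof of (c).} First $d\ge1$: the set $\R\setminus B$ is nonempty (indeed infinite), so the constant function $1$ restricts to a nonzero element of $F'$. Next, $F'$ is spanned by the restrictions of $1,T^1,\dots,T^k$. Hence $d=1$ if and only if $F'=\spn(1|_{\R\setminus B})$, which holds if and only if every $T^j|_{\R\setminus B}$ is a constant multiple of $1$, i.e.\ if and only if every $T^j$ is constant on $\R\setminus B$.
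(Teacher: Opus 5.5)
Your proof is correct and follows essentially the same route as the paper. A basis of the finite-dimensional space $F_0$ gives the finiteness of $B$, (a) is read off the definition of $F_0$, (b) extracts a basis of the dual from the evaluation functionals on $\R \setminus (B \cup C)$, and (c) is the observation that $F'$ is spanned by the constants and the restrictions of the $T^j$.
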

	
	\begin{proof}
		$F_0$ is finite-dimensional; if $g^1, \dots, g^m$ is a basis, every element of $F_0$ is supported in
		$\bigcup_l \mathrm{supp}\, g^l$, which is finite, so $B$ is finite. Since $\R \setminus B$ is
		non-empty, the constant $1$ has a non-zero restriction, so $d \ge 1$.
		(a) Let $g \in F$ vanish on $\R \setminus (B \cup C)$. Then $g$ is supported in the finite set
		$B \cup C$, so $g \in F_0$ and $g$ vanishes outside $B$: its restriction to $\R \setminus B$ is zero.
		(b) By (a), the restrictions of the elements of $F'$ to $E \eqdef \R \setminus (B \cup C)$ form a
		space $F''$ of dimension $d$. The evaluations $g \mapsto g(y)$, $y \in E$, span the dual of $F''$:
		otherwise some non-zero $g \in F''$ would vanish at every point of $E$. Extract a basis
		$y_1, \dots, y_d$ of evaluations; the corresponding map is a bijection $F'' \to \R^d$, hence, by (a),
		a bijection $F' \to \R^d$.
		(c) $F'$ contains the constants, and is spanned by them and the restrictions of the $T^j$.
	\end{proof}
	
	\begin{proof}[Proof of Theorem~\ref{thm:main}]
		Fix $s, u > 0$; our goal is to prove that $h'(s) = h'(u)$ when $f''(1) \neq 0$
		(Lemma~\ref{lem:const} then concludes), and that $h'(u) = 0$ when $f''(1) = 0$, taking $s = 1$
		(which makes $f$ affine and shows that this case in fact never occurs). Both
		conclusions will be read off Step~\ref{step:affine} on a configuration built once and for all.
		
		\bigskip
		\noindent\textbf{A preliminary reduction.}
		Suppose first that some value $V \in \R^k$ of the vector $T$ is attained at least three times, and
		pick three distinct points $x_1, x_2, x_3 \in T^{-1}(V)$. Since $\mathcal{T}$ is free, $T$ is not
		constant on $\R$, so there is $x_4 \in T^{-1}(V')$ for some value $V' \neq V$; pick $j$ with
		$V'_j \neq V_j$. On $\X = \{x_1, \dots, x_4\}$, every $g \in S_\X$ takes the same value at $x_1$,
		$x_2$ and $x_3$. Lemma~\ref{lem:anchor} (with coordinate $j$, pivots $\{3, 4\}$ ordered according
		to the sign of $V'_j - V_j$, $A = \{1, 2\}$ and $B = \emptyset$), or Remark~\ref{rem:trivialanchor}
		with weights $(\epsilon, \epsilon, \frac12 - \epsilon, \frac12 - \epsilon)$ if $f''(1) \neq 0$,
		provides an anchored configuration on these four points in which the weights at $x_1$ and $x_2$ are
		small while $\nu^\ast_3$ stays bounded away from $0$. Setting then $r_1 = s$, $r_2 = u$, $r_4 = 1$ and
		\[
		r_3 = 1 - \frac{\nu^\ast_1\,(s-1) + \nu^\ast_2\,(u-1)}{\nu^\ast_3}\,,
		\]
		positive for that choice of weights, the constraints of $\mathcal{R}(\nu_\ast)$ hold by inspection
		(group the indices $1, 2, 3$, where every $g \in S_\X$ takes a common value). After the standing
		translation, $\tau_j = 0$ lies strictly between the two values $V_j$ and $V'_j$ taken by $T^j$ on
		the configuration, so $V_j \neq 0$; and Step~\ref{step:affine} at the indices $1$ and $2$, which
		share the value $V$ hence the weight $w_V$, and at which every element of $S_\X$ coincides, gives
		\[
		w_V\, V_j\, h'(s) \;=\; w_V\, V_j\, h'(u)\,,
		\]
		so $h'(s) = h'(u)$. As $s, u$ were arbitrary, $h'$ is constant and Lemma~\ref{lem:const} already
		proves the theorem in this situation (note that the argument used neither the value of
		$f''(1)$ nor $s = 1$). \emph{From now on we may therefore assume that every value of $T$ is
			attained at most twice.} With the notation of Lemma~\ref{lem:points}, this forces $d \ge 2$:
		otherwise, by Lemma~\ref{lem:points}(c), $T$ would be constant on the infinite set $\R \setminus B$.
		We fix $j$ such that $T^j$ is non-constant on $\R \setminus B$.
		
		\bigskip
		\noindent\textbf{The common configuration.}
		By Lemma~\ref{lem:points}(b), pick $d$ points $x_{d+1}, \dots, x_{2d}$ in $\R \setminus B$ on which
		evaluation is a bijection from $F'$ onto $\R^d$, and then $d$ points $x_1, \dots, x_d$ in
		$\R \setminus (B \cup \{x_{d+1}, \dots, x_{2d}\})$ with the same property. We write
		$P = \{1, \dots, d\}$ and $Q = \{d+1, \dots, 2d\}$ for the corresponding sets of indices. Since
		evaluation on $Q$ is injective on $F'$, $T^j$ is non-constant on $Q$ (otherwise $T^j - c$ would
		vanish in $F'$ for some constant $c$), and we pick $\kappa, \Lambda \in Q$ with
		$T^j_\kappa < T^j_\Lambda$. Similarly, by Lemma~\ref{lem:points}(a), $T^j$ is non-constant on
		$\R \setminus (B \cup \{x_1, \dots, x_{2d}\})$, so we can pick there two points $y', y''$ with
		$T^j(y') \neq T^j(y'')$. The last point $x_\ell$, $\ell = 2d + 1$, will be one of them.
		
		If $f''(1) \neq 0$, we take the trivial anchor $\theta^\ast = 0$ of Remark~\ref{rem:trivialanchor},
		with weights $\lambda_i = \epsilon$ for $i \in P \cup \{\ell\}$ and
		$\lambda_i = (1 - (d+1)\epsilon)/d$ for $i \in Q$. If $f''(1) = 0$, we apply Lemma~\ref{lem:anchor}
		to the points $x_1, \dots, x_{2d}, y', y''$, with coordinate $j$ and pivots $(\kappa, \Lambda)$; we
		then fix $\eta \in (0, \delta]$ and use the weights of Lemma~\ref{lem:anchor} with
		$A = P \cup \{\ell\}$ and $B = Q \setminus \{\kappa, \Lambda\}$. In both cases $\theta^\ast$, hence
		every $p_i$ and $w_i$, depends neither on $\epsilon$ nor on the choice of $x_\ell$, and as
		$\epsilon \to 0$ the weights $\lambda_i$, $i \in Q$, converge to positive limits $\lambda^0_i$.
		Consequently $\nu^\ast_i \to \nu^0_i \eqdef \lambda^0_i p_i > 0$ for $i \in Q$,
		$\nu^\ast_i \to 0$ for $i \in P \cup \{\ell\}$, and the barycentre (before translation) converges:
		\[
		\tau_j \;\longrightarrow\; \tau^0_j \eqdef
		\frac{\sum_{i \in Q} \nu^0_i\, w_i\, T^j_i}{\sum_{i \in Q} \nu^0_i\, w_i}\,,
		\]
		a number which does not depend on $x_\ell$. We choose $x_\ell \in \{y', y''\}$ with
		$T^j(x_\ell) \neq \tau^0_j$ and set $\X = \{x_1, \dots, x_{2d}, x_\ell\}$, so that $n = 2d + 1$. For
		$\epsilon$ small enough, these data form an anchored configuration on $\X$, and
		$T^j(x_\ell) \neq \tau_j$; after the standing translation, $T^j_\ell \neq 0$.
		
		Since $\X \subset \R \setminus B$ and evaluation on $Q$ is injective on $F'$, $\dim S_\X = d$ and
		the restriction maps $S_\X \to \R^P$ and $S_\X \to \R^Q$ are bijective. Prescribe
		\[
		r_i = s \quad (i \in P)\,, \qquad r_\ell = u\,,
		\]
		and let $g^1, \dots, g^d$ be a basis of $S_\X$. The condition $r \in \mathcal{A}(\nu_\ast)$ is the
		linear system
		\[
		\sum_{i \in Q} \nu^\ast_i\, g^m_i\, (r_i - 1) \;=\; - \sum_{i \in P \cup \{\ell\}} \nu^\ast_i\, g^m_i\, (r_i - 1)
		\qquad (m = 1, \dots, d)
		\]
		in the unknowns $(r_i - 1)_{i \in Q}$. As $\epsilon \to 0$, its matrix converges to
		$\bigl(\nu^0_i\, g^m_i\bigr)_{m,\, i \in Q}$, which is invertible since $(g^m_i)_{m,\, i \in Q}$ is,
		while its right-hand side is $O(\epsilon)$. Hence the solution exists for $\epsilon$ small and
		satisfies $r_i \to 1$ for $i \in Q$; in particular $r_i > 0$ and $r \in \mathcal{R}(\nu_\ast)$. We
		fix such an $\epsilon$; by Step~\ref{step:affine}, $\zeta^j(r) \in S_\X$. We can now conclude.
		
		\bigskip
		\noindent\emph{Case 1: $f''(1) = 0$.}
		\smallskip
		
		We show that this case cannot occur. Here $h'(1) = -f''(1) = 0$; take $s = 1$ in the
		construction above. At the indices $i \in P$, where $r_i = 1$, the vector
		$\zeta^j(r) = \bigl(w_i\, T^j_i\, h'(r_i)\bigr)_i$ vanishes; as $S_\X \to \R^P$ is injective,
		$\zeta^j(r) = 0$. At the index $\ell$ this reads
		\[
		w_\ell\, T^j_\ell\, h'(u) = 0\,,
		\]
		and $w_\ell > 0$, $T^j_\ell \neq 0$ give $h'(u) = 0$. As $u > 0$ was arbitrary, $h' \equiv 0$ on
		$\R^*_+$, so $f''(x) = -h'(x)/x$ vanishes identically and $f$ is affine (contradicting strict
		convexity). A $\mathcal{T}$-entropy-adapted $C^2$ divergence therefore satisfies $f''(1) \neq 0$,
		and Case~2 proves the theorem.
		
		\bigskip
		\noindent\emph{Case 2: $f''(1) \neq 0$.}
		\smallskip
		
		The anchor is the trivial one of Remark~\ref{rem:trivialanchor}: $\nu_\ast = \lambda$ and the
		weights are constant, $w_i \equiv w = 1/f''(1) > 0$. Since $T^j|_\X \in S_\X$, the vector
		\[
		\zeta^j(r) - w\, h'(s)\, T^j|_\X \;=\; \bigl(w\, T^j_i\, (h'(r_i) - h'(s))\bigr)_{1 \le i \le n}
		\]
		belongs to $S_\X$ and vanishes on $P$, where $r_i = s$; it is therefore zero. At the index $\ell$
		this gives
		\[
		w\, T^j_\ell\, \bigl(h'(u) - h'(s)\bigr) = 0\,,
		\]
		and $T^j_\ell \neq 0$ gives $h'(u) = h'(s)$. As $s, u > 0$ were arbitrary, $h'$ is constant on
		$\R^*_+$ and Lemma~\ref{lem:const} concludes: up to equivalence, $\Df = \KL$.
	\end{proof}
	
	\begin{remark}
		For $k = 1$ and a function $T$ taking each value at most twice, one has $B = \emptyset$ and $d = 2$,
		and the common configuration consists of five points: $P = \{1, 2\}$ carries the prescribed value
		$s$, $Q = \{3, 4\}$ consists of the two pivots used to balance the constraints, and $x_5$ is the
		test point. No reduction to families of the form $(1, T)$ is needed: such a reduction would require
		that a $\mathcal{T}$-entropy-adapted $f$ be $(1, T^j)$-entropy-adapted, which does not follow
		directly from Definition~\ref{def:adapted}.
	\end{remark}

	\section{Numerical Results}
	\label{s:Num}
	\subsection{Setting}
	
	We illustrate numerically the distinction between moment matching and best approximation within the generalized exponential family. The state space is
	\[
	\mathcal X=[-1,1],
	\]
	and the reference measure $\lambda$ is the uniform probability measure on $\mathcal X$, with density
	\[
	\xi(x)=\frac12.
	\]
	We consider the Chebyshev polynomials of degree $0$ to $k=10$,
	\[
	\mathcal{T}=(T^0,\ldots,T^k),
	\qquad T^0\equiv 1,
	\]
	as the free family defining the moment constraints.
	
	The target distribution is chosen to have a smooth, strictly positive and bimodal density:
	\[
	p(x)
	\propto
	0.05
	+0.75\exp\left(
	-\frac12\left(\frac{x+0.38}{0.12}\right)^2
	\right)
	+\exp\left(
	-\frac12\left(\frac{x-0.28}{0.18}\right)^2
	\right).
	\]
	The constant positive background ensures that the target density remains strictly positive on the whole interval.
	
	For a given generator $f$, we compare two elements of the associated family $\Ef$.
	The first one is the \emph{moment matcher}, denoted by $\nu^{\mathrm{MM}}_f$, obtained as the minimum-$D_f$ reconstruction under the moment constraints
	\[
	\nu^{\mathrm{MM}}_f(T^j)=\nu_0(T^j),
	\qquad j=0,\ldots,k.
	\]
	Equivalently, it is the generalized maximum-entropy reconstruction associated with $D_f$.
	
	The second one is the genuine $\Df$-projection of the target distribution onto $\Ef$,
	\[
	\nu^{\mathrm{Proj}}_f
	\in
	\argmin_{\nu_\theta\in\Ef}
	\Df(\nu_0,\nu_\theta).
	\]
	To quantify the discrepancy between these two constructions, we introduce the \emph{projection gap}
	\[
	\Delta_f
	:=
	\Df(\nu_0,\nu^{\mathrm{MM}}_f)
	-
	\Df(\nu_0,\nu^{\mathrm{Proj}}_f)
	\geq 0,
	\]
	as well as the $L^2$ distance
	\[
	\left\|
	p^{\mathrm{MM}}_f-p^{\mathrm{Proj}}_f
	\right\|_{L^2([-1,1])}.
	\]
	
	We consider two divergences. First, the Kullback--Leibler divergence, associated with
	\[
	f_{\mathrm{KL}}(u)=u\ln u-u+1,
	\]
	for which Theorem~\ref{thm:main} predicts that moment matching and projection coincide. We then consider the Pearson $\chi^2$ divergence, with generator
	\[
	f_{\chi^2}(u)=\frac12(u-1)^2,
	\]
	as a representative alternative $f$-divergence.
	
	All integrals and moments are evaluated numerically using a Gauss--Legendre quadrature of order $600$. The moment matcher is computed through the finite-dimensional dual problem, while the best projection is obtained by directly minimizing $\Df(\nu_0,\nu_\theta)$ over the normalized family $\Ef$.
	
	\subsection{Reconstruction}
	
	Figure~\ref{fig:numerical_projection_kl} shows the result for the Kullback--Leibler divergence. The moment-matching reconstruction and the direct KL projection are visually indistinguishable. Numerically, the projection gap is
	\[
	\Delta_{\mathrm{KL}}
	=
	6.373\times 10^{-12},
	\]
	while
	\[
	\left\|
	p^{\mathrm{MM}}_{\mathrm{KL}}
	-
	p^{\mathrm{Proj}}_{\mathrm{KL}}
	\right\|_{L^2}
	=
	1.484\times 10^{-6}.
	\]
	These residual values are at the level of the numerical optimization and quadrature errors. The experiment therefore recovers, up to numerical precision, the identity
	\[
	\nu^{\mathrm{MM}}_{\mathrm{KL}}
	=
	\nu^{\mathrm{Proj}}_{\mathrm{KL}},
	\]
	in agreement with the projection property established theoretically.
	
	\begin{figure}[t]
		\centering
		\includegraphics[width=0.82\textwidth]{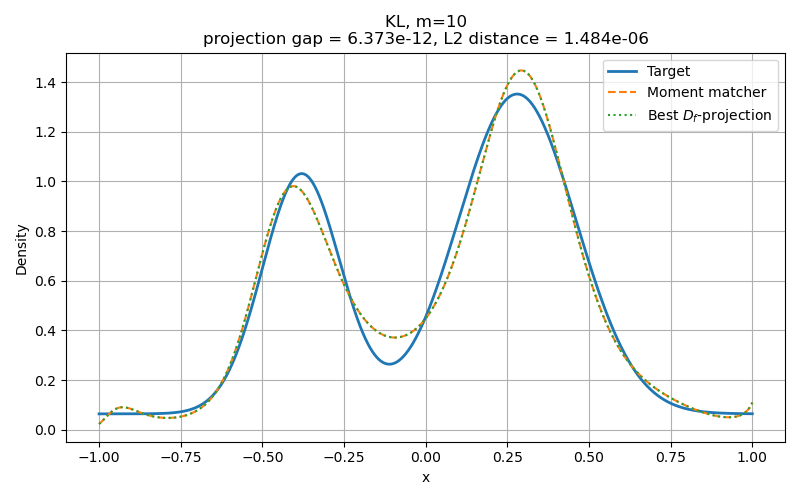}
		\caption{
			Moment matching and best projection for the Kullback--Leibler
			divergence with $k=10$ Chebyshev moments.
			The moment matcher and the best KL projection coincide up to numerical
			precision.
		}
		\label{fig:numerical_projection_kl}
	\end{figure}
	
	The situation is different for the Pearson $\chi^2$ divergence, as shown in Figure~\ref{fig:numerical_projection_chi2}. Although the moment matcher reproduces exactly the prescribed finite collection of moments, it does not minimize the Pearson divergence between the target and the associated generalized exponential family. In this case, we obtain
	\[
	\Delta_{\chi^2}
	=
	3.630\times 10^{-2},
	\]
	and
	\[
	\left\|
	p^{\mathrm{MM}}_{\chi^2}
	-
	p^{\mathrm{Proj}}_{\chi^2}
	\right\|_{L^2}
	=
	1.658\times 10^{-1}.
	\]
	The difference is clearly visible over the whole interval, and is particularly pronounced close to the boundary and around the two modes of the target density.
	
	\begin{figure}[t]
		\centering
		\includegraphics[width=0.82\textwidth]{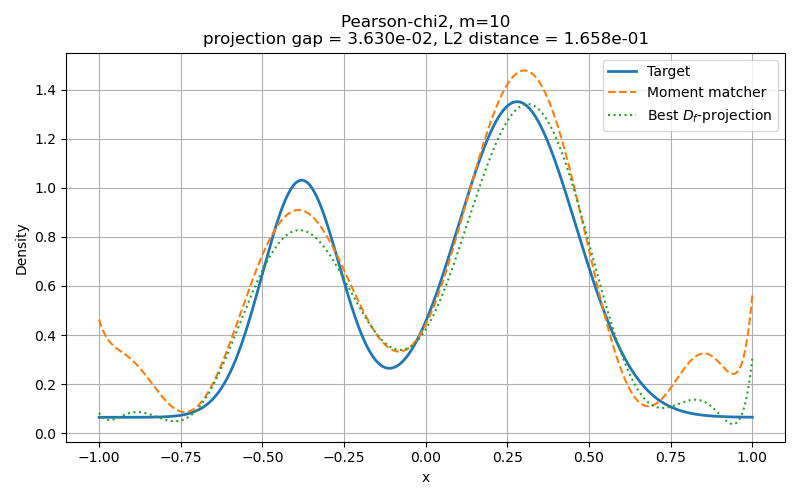}
		\caption{
			Moment matching and best projection for the Pearson $\chi^2$
			divergence with $k=10$ Chebyshev moments.
			Contrary to the KL case, the moment matcher and the best
			$D_f$-projection are distinct.
		}
		\label{fig:numerical_projection_chi2}
	\end{figure}
	
	These experiments provide a numerical illustration of the characterization established in Theorem~\ref{thm:main}:
	\begin{itemize}
		\item For KL, the information contained in the prescribed moments is sufficient to recover the best approximation of the target within the associated exponential family. 
		\item For the Pearson $\chi^2$ divergence, the generalized maximum-entropy reconstruction still matches exactly the same prescribed moments, but this information is no longer sufficient to identify the best $\Df$-approximation within $\Ef$. The non-zero projection gap therefore quantifies precisely the failure of the projection property outside the Kullback--Leibler setting.
	\end{itemize}
	
	\subsection{The entropy-adapted inequality: KL versus other $f$-divergences}
	\label{s:num-adapted}
	
	We now illustrate the entropy-adapted inequality of Definition~\ref{def:adapted} itself, by measuring
	how negative
	\[
	\Df(\mu,\nu_\theta)-\Df(\mu,\nu_{\bar\theta})
	\]
	can become when $\mu$ ranges over the moment set of $\nu_0$ and $\nu_\theta$ over $\Ef$. In this experiment
	we use fewer moments than above, namely the Chebyshev polynomials of degree $0$ to $k=6$. We draw $40$
	measures $\mu\in\MT{\nu_0}$ (including $\nu_0$ itself) of the
	form $\dd\mu/\dd\lambda=p_0+s\,r$, where $r$ is a random combination of
	the Chebyshev polynomials $T^7,\dots,T^{20}$ orthogonalised against $\mathrm{span}(\mathcal{T})$ in $L^2(\lambda)$,
	and $s>0$ is a scale small enough for this density to remain positive. All
	of them have exactly the moments of $\nu_0$.
	
	To vary $f$ continuously we use the
	$\alpha$-divergences
	\[
	f_\alpha(u)=\frac{u^\alpha-\alpha u+\alpha-1}{\alpha(\alpha-1)},\qquad
	(f_\alpha')^{-1}(t)=\bigl(1+(\alpha-1)t\bigr)^{1/(\alpha-1)},
	\]
	which give, by continuity at $\alpha\in\{0,1\}$, the reverse KL divergence ($\alpha=0$), the Hellinger divergence
	($\alpha=\tfrac12$, up to a factor $4$), KL ($\alpha=1$) and Pearson $\chi^2$
	($\alpha=2$). All generators satisfy $f(1)=f'(1)=0$. The moment matcher is computed
	by Newton's method on the convex dual
	$\theta\mapsto\int f^*(\theta^\top \mathcal{T})\,\dd\lambda-\theta^\top\nu(\mathcal{T})$. The projection
	$\argmin_{\Ef}\Df(\mu,\cdot)$ is computed over $(\theta_1,\dots,\theta_k)$,
	with $\theta_0$ fixed by normalisation and an exact gradient
	$\partial_{\theta_j}\Df(\mu,\nu_\theta)=\int h(\dd\mu/\dd\nu_\theta)\,\partial_{\theta_j}p_\theta\,\dd\lambda$,
	where $h(x)=f(x)-xf'(x)$ is the function of Lemma~\ref{lem:const}.
	For each of the $40$ measures $\mu$ and each $\alpha\in[0,2]$, we compute
	the relative projection gap
	\[
	\delta_\alpha(\mu)=\frac{D_{f_\alpha}(\mu,\nu_{\bar\theta})-\min_{\mathcal{E}_{f_\alpha}}D_{f_\alpha}(\mu,\cdot)}
	{D_{f_\alpha}(\mu,\nu_{\bar\theta})},
	\]
	which measures the worst violation of the inequality in Definition~\ref{def:adapted} for a given $\mu$
	(Figure~\ref{fig:num-C}). The gap is of order $10^{-2}$--$10^{-1}$ away from $\alpha=1$; it decreases
	continuously as $\alpha\to1$, as expected from the continuity of $f_\alpha$ in $\alpha$, and vanishes only
	at $\alpha=1$, where it is at machine precision (median $1.3\times10^{-16}$). Within this one-parameter
	family, KL is therefore the only divergence for which the projection property holds, as
	Theorem~\ref{thm:main} predicts.
	
	\begin{figure}[t]
		\centering
		\includegraphics[width=0.85\textwidth]{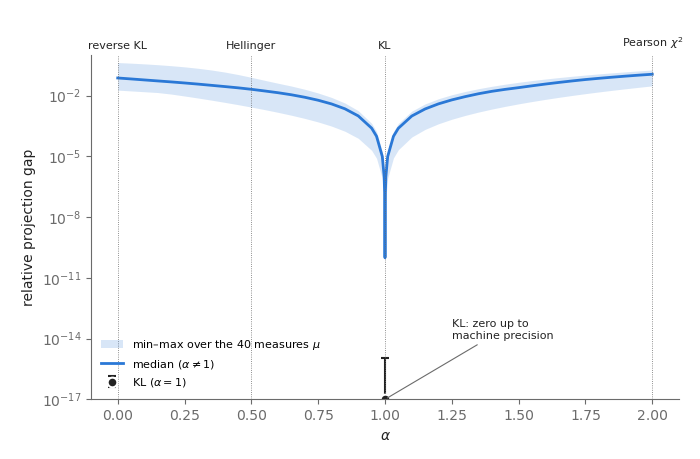}
		\caption{Relative projection gap $\delta_\alpha(\mu)$ for the
			$\alpha$-divergences, over $40$ measures $\mu$ sharing the same $k=6$ Chebyshev
			moments. It is zero only for KL.}
		\label{fig:num-C}
	\end{figure}
	
	\section{Conclusion}
	
	We studied whether the classical coincidence between maximum-entropy reconstruction, projection onto the associated exponential family, and ordinary moment matching, which makes Kullback--Leibler so effective for moment-based reconstruction, extends to other $f$-divergences. Introducing the notion of $\mathcal T$-entropy-adapted generators, we showed that KL is, up to equivalence, the only $C^2$ $f$-divergence for which this compatibility holds. The maximum-entropy construction itself survives for any $f$, but the Pythagorean mechanism that turns reconstruction into an approximation problem for the log-density is a specific feature of KL geometry. This explains notably why the guarantees of \citet{bringas2026momentmatching} for moment-based DistRL do not carry over to generalized entropies.
	
	Our result concerns the sharp projection inequality, with constant one. Whether a relaxed version
	with a multiplicative constant, $D_f(\mu,\nu_\theta) \geq c\, D_f(\mu,\nu_{\bar\theta})$ for some
	$c \in (0,1)$, can hold for $f$-divergences other than KL remains open. Indeed, such a quasi-projection
	property would still suffice for many applications, e.g. to derive convergence rates for the
	moment-matching reconstruction from those of the best approximation in $\Ef$.
	
	\section*{Acknowledgments}
	The authors acknowledge the support of Chaire SeqALO (ANR-20-CHIA-0020-01), and PEPR-IA project FOUNDRY  (ANR-23-PEIA-0003).
	
	\bibliographystyle{plainnat}
	\bibliography{bibliography}
	
	\appendix
	\section{Relation with the Bregman characterization of KL}
	\label{s:AppA}
	It is well known that, on probability distributions, the Kullback--Leibler divergence is, up to the usual
	equivalence of generators, the only divergence that belongs simultaneously
	to the classes of $f$-divergences and Bregman divergences
	\citep{amari2009alpha}; on positive measures, the $\alpha$-divergences share this property. Since Bregman divergences satisfy natural Pythagorean
	projection identities, it is therefore legitimate to ask whether the
	characterization established in Theorem~\ref{thm:main} is merely a direct
	consequence of this classical result. We explain here why this is not the
	case. An illustration is provided in Figure~\ref{fig:bregman-fdiv-summary}.
	
	Let
	\[
	B_F(p,q)
	=
	F(p)-F(q)-\langle \nabla F(q),p-q\rangle
	\]
	be a Bregman divergence. Its three-point identity reads
	\[
	B_F(p,r)
	=
	B_F(p,q)+B_F(q,r)
	+
	\langle p-q,\nabla F(q)-\nabla F(r)\rangle.
	\]
	Consider a family that is affine in the dual coordinates induced by $F$,
	namely
	\[
	\nabla F(p_\theta)
	=
	\theta^\top \mathcal{T}
	=
	\theta_0 + \sum_{j=1}^k \theta_j T^j,
	\]
	with $\theta_0$ fixed by normalization. If $p_{\bar\theta}$ matches the moments of
	$p$, then
	\[
	\langle p-p_{\bar\theta},
	\nabla F(p_{\bar\theta})-\nabla F(p_\theta)\rangle
	=0,
	\]
	because the second factor belongs to $\spn(\mathcal{T})$, whereas
	$p-p_{\bar\theta}$ is orthogonal to this space by moment matching (the constant $T^0$ included,
	both being probability densities).
	Consequently,
	\[
	B_F(p,p_\theta)
	=
	B_F(p,p_{\bar\theta})
	+
	B_F(p_{\bar\theta},p_\theta),
	\]
	and hence
	\[
	B_F(p,p_{\bar\theta})
	\leq
	B_F(p,p_\theta).
	\]
	Thus, the projection property studied in this paper is indeed naturally
	satisfied by Bregman divergences when the approximation family is chosen
	according to their dual geometry. This is the inclusion of the set of Bregman divergences in the set of entropy-adapted divergences in Figure~\ref{fig:bregman-fdiv-summary}.
	
	For the $f$-divergences considered here, the associated maximum-entropy
	family has precisely this form for the Bregman divergence generated by the
	integral functional
	\[
	F_f(p)=\int_{\X} f(p)\,\dd\lambda.
	\]
	Indeed,
	\[
	\nabla F_f(p)=f'(p),
	\]
	and the family introduced in Section~\ref{s:Preliminaries} satisfies
	\[
	f'(p_\theta)=\theta^\top \mathcal{T}.
	\]
	The corresponding Bregman divergence is
	\[
	B_{F_f}(p,q)
	=
	\int_{\X}
	\left[
	f(p)-f(q)-f'(q)(p-q)
	\right]\dd\lambda.
	\]
	For this divergence, moment matching therefore yields the Pythagorean
	identity above automatically.
	
	However, the divergence studied in this paper is not $B_{F_f}$ but the $f$-divergence
	\[
	D_f(p,q)
	=
	\int_{\X}
	q\,f\!\left(\frac{p}{q}\right)\dd\lambda.
	\]
	These two divergences are different in general. They coincide, up to a
	multiplicative constant, in the Kullback--Leibler case, but not for a
	generic generator $f$. Our characterization therefore asks when the
	$f$-divergence $D_f$ inherits the projection geometry that is naturally
	associated with the Bregman divergence $B_{F_f}$.
	
	More importantly, the projection inequality alone does not imply that a
	divergence is Bregman. Indeed, let
	\[
	D(p,q)=\KL(p,q)^2.
	\]
	Since the map $x\mapsto x^2$ is strictly increasing on $\mathbb R_+$,
	$D$ has exactly the same minimizers as $\KL$. Hence, whenever
	$p_{\bar\theta}$ is the moment-matching point in an exponential family,
	\[
	D(p,p_{\bar\theta})
	\leq
	D(p,p_\theta)
	\qquad
	\text{for every }p_\theta
	\]
	follows immediately from the corresponding KL inequality. Nevertheless,
	$D=\KL^2$ is not a Bregman divergence. For a Bregman divergence,
	the Hessian with respect to its first argument necessarily satisfies
	\[
	\nabla_{pp}^2 B_F(p,q)=\nabla^2F(p),
	\]
	and is therefore independent of $q$. In contrast, writing
	$K(p,q)=\KL(p,q)$ gives
	\[
	\nabla_{pp}^2 K(p,q)^2
	=
	2\nabla_pK(p,q)\nabla_pK(p,q)^\top
	+
	2K(p,q)\nabla_{pp}^2K(p,q),
	\]
	which depends explicitly on $q$.
	
	Therefore, the moment-matching projection property does not imply that the divergence is Bregman.
	As a consequence, Theorem~\ref{thm:main} cannot be
	obtained simply by combining the projection property with the classical
	characterization that the only Bregman $f$-divergence is KL.
	The content of Theorem~\ref{thm:main} is that the notion of entropy-adapted divergence is already strong
	enough to force
	\[
	x f''(x)=\mathrm{constant},
	\]
	and hence to characterize the Kullback--Leibler divergence.
	
\end{document}